%
\RequirePackage{fix-cm}
\documentclass[smallextended]{svjour3}       
\smartqed  
\usepackage{graphicx}

\usepackage{amsfonts}
\usepackage{amssymb}
\usepackage{amsmath}
\usepackage{mathrsfs}

%


\newcommand{\R}{\mathbb R}
\newcommand{\N}{\mathbb N}

\newcommand{\X}{\mathbb X}
\newcommand{\Y}{\mathbb Y}
\newcommand{\Uball}{{\mathbb B}}
\newcommand{\Usfer}{{\mathbb S}}

\newcommand{\dom}{{\rm dom}\, }
\newcommand{\gr}{{\rm graph}\,}

\newcommand{\nullv}{\mathbf{0}}

\newcommand{\inte}{{\rm int}\, }

\newcommand{\alphao}{\alpha_0}

\newcommand{\Lin}{{\mathcal L}}
\newcommand{\Hom}{{\mathcal H}}
\newcommand{\Kern}{{\rm Ker}\,}
\newcommand{\Fan}{{\mathcal A}}
\newcommand{\GEq}{{\tt (GE)}}
\newcommand{\AGEq}{{\tt (AGE)}}

\newcommand{\ball}[2]{{\rm B}(#1, #2)}

\newcommand{\cldis}[1]{|#1|^\downarrow}
\newcommand{\grsl}[1]{#1^\downarrow}
\newcommand{\FDer}[1]{{\rm D}#1}
\newcommand{\Fsubd}{\widehat{\partial}}

\newcommand{\dist}[2]{{\rm dist}\left(#1,#2\right)}
\newcommand{\subregmap}[2]{{\rm subreg}\, #1(#2)}
\newcommand{\calmap}[2]{{\rm clm}\, #1(#2) }
\newcommand{\intrad}[2]{\varrho(#1)(#2)}
\newcommand{\Solv}[2]{S_{#1,#2}}

\newcommand{\subreg}[3]{{\rm subreg}\, #1(#2,#3)}
\newcommand{\calm}[3]{{\rm clm}\, #1(#2,#3) }


\begin{document}

\title{A strong metric subregularity analysis of nonsmooth mappings
via steepest displacement rate}

\titlerunning{Strong metric subregularity of nonsmooth mappings}       

\author{A. Uderzo}


\institute{A. Uderzo \at
                   Dept. of Mathematics and Applications \\
                   University of Milano-Bicocca \\
              Tel.: +39-02-64485871\\
              Fax: +39-02-64485705 \\
              \email{\tt amos.uderzo@unimib.it} 
}

\date{\today}

\maketitle

\begin{abstract} 
In this paper, a systematic study of the strong metric subregularity
property of mappings is carried out by means of a variational
tool, called steepest displacement rate. With the aid of this tool, a simple
characterization of strong metric subregularity for multifunctions
acting in metric spaces is formulated. The resulting criterion
is shown to be useful for establishing stability properties of the
strong metric subregularity in the presence of perturbations, as well as for
deriving various conditions, enabling to detect such a property
in the case of nonsmooth mappings.
Some of these conditions, involving several nonsmooth analysis
constructions, are then applied in studying the isolated calmness
property of the solution mapping to parameterized generalized
equations.
\end{abstract}

\keywords{Strong metric subregularity \and Steepest descent rate
\and Sharp minimality \and Isolated calmness \and Injectivity constant
\and First-order $\epsilon$-approximation \and Outer prederivative
\and Generalized equation}

\subclass{49J53 \and 49J52\and 90C48}

\section{Introduction}
\label{intro}

Several remarkable advances in optimization have been made
possible in recent years
thanks to a deepened understanding of stability properties of
multifunctions. In fact, their study has gained a well-recognized place within
modern variational analysis. Among the properties of multifunctions
mainly applied in optimization and related topics, those describing
a Lipschitzian behaviour play a crucial role. Under this category falls
metric regularity, which is most likely the best known and widely
employed. Nonetheless, it turns out that metric regularity is not
strictly requested in certain circumstances, while its work can
be done by a weaker property called metric subregularity, of
course at a lower price in terms of problem assumptions.
Consider, for instance, the algebraic characterization of the
tangent space to a manifold, which is defined by an equation expressed by
a smooth mapping. According to a standard argument, 
this is the key tool for deriving the
Euler-Lagrange multiplier rule in nonlinear optimization,
often presented among the consequences of the celebrated
Lyusternik's theorem (see \cite{AlTiFo87,BorZhu05,IofTik79}).
In order to establish the non-trivial inclusion (the kernel of the
derivative is contained in the tangent space) metric regularity
is usually invoked, even though the mere metric subregularity
would be enough. As another example, consider the exact
penalization principle for constrained optimization problem
with Lipschitz objective function (see \cite{FacPan03}). It happens that
this principle can be invoked, provided  that a certain error
bound inequality is valid, and for the latter circumstance
the metric subregularity of the constraining mapping is enough.
All of this contributed to raise a large interest in metric subregularity, on
which a dedicated vast literature does now exist
(see \cite{DonRoc09,HenOut05,Krug15,NgaThe01,NgaTin15,ZheNg10}
and references therein).

The main drawback of metric subregularity is its lack of robustness
under (even small) perturbations. More precisely, it  has been
observed that such property happens to be broken if adding
to a metrically subregular mapping (even single-valued and smooth)
a Lipschitz term, yet with a small Lipschitz constant. This 
well-known phenomenon
explains the difficulty in employing perturbation schemes,
when studying criteria for detecting metric subregularity.
In this paper, a systematic study is proposed of a special variant
of metric subregularity, called strong metric subregularity,
which is known to exhibit a notable robustness quality, while
keeping rather low requirements in comparison with metric
regularity. In particular, the present study concentrates on
sufficient conditions
for the strong metric subregularity of (possibly) nonsmooth
mappings. The analysis of this topic is performed by making use
of a variational tool called steepest displacement rate, that
enables to formulate a general criterion already in a metric space setting.
Such an approach leads to a unifying scheme of analysis
and emphasizes the connection of the property under study with
the notion of local sharp minimality.
Strong metric subregularity along with its stability properties
and related infinitesimal characterizations have been considered
recently by several authors. For an account on various aspects
of the emerging theory of strong metric subregularity, the
reader may refer to \cite{DonRoc09}.

The contents of the paper are exposed according to the following
structure. In Section \ref{sec:2}, after the notion of strong metric
subregularity is presented and its reformulation in terms of
isolated calmness property for the inverse mapping is recalled, the notion
of steepest displacement rate is introduced and exploited to establish
the basic characterization. The connection with local sharp minimality
is also discussed, while several situations arising in different topics
are illustrated, aimed at providing motivations for the interest in the
main subject of the paper. In Section \ref{sec:3}, two relevant
manifestations of the robustness behaviour of strong metric
subregularity are embedded in the framework of the steepest
displacement rate analysis. In Section \ref{sec:4} several known tools
of nonsmooth analysis are combined with the main criterion in order
to obtain conditions for the strong metric subregularity of nonsmooth
mappings. Some of these results are then applied in Section \ref{sec:5}
to investigate the isolated calmness property of the solution mapping
to parameterized generalized equations, with base and field term.
A final section is reserved for general comments on the exposed
achievements.


\section{Strong metric subregularity and its equivalent reformulations}
\label{sec:2}

Let us start with recalling the main properties under study. This
will be done in a metric space setting, which is the natural environment
where the Lipschitzian analysis of stability of multifunctions can
be conducted. In a metric space $(X,d)$, the distance from a
point $x\in X$ to a subset $S\subseteq X$ is denoted by $\dist{x}{S}$,
with the convention that $\dist{x}{\varnothing}=+\infty$,
while $\ball{x}{r}$ denotes the closed ball with center $x$ and
radius $r$.

\begin{definition}      \label{def:strmsubreg}
(i) A set-valued mapping $F:X\rightrightarrows Y$ between metric spaces is called
{\it metrically subregular} at $(\bar x,\bar y)\in\gr F$ if there exist $\kappa\ge 0$ and
$r>0$ such that
\begin{eqnarray}  \label{in:msubreg}
      \dist{x}{F^{-1}(\bar y)}\le\kappa\,\dist{\bar y}{F(x)},\quad
     \forall x\in\ball{\bar x}{r}.   
\end{eqnarray}
Denote by
$$
   \subreg{F}{\bar x}{\bar y}=\inf\{\kappa\ge 0:\ \exists r>0
   \hbox{ satisfying } (\ref{in:msubreg})\}
$$
the {\it modulus of subregularity} of $F$ at $(\bar x,\bar y)$.
Whenever $F$ is single-valued, the simpler notation
$\subregmap{F}{\bar x}$ will be used.

(ii) A set-valued mapping $F:X\rightrightarrows Y$ between metric spaces is called
{\it strongly metrically subregular} at $(\bar x,\bar y)\in\gr F$ if $F$ is metrically
subregular at $(\bar x,\bar y)$ and, in addition, $\bar x$ is an isolated point
of $F^{-1}(\bar y)$ or, equivalently, if there exist $\kappa\ge 0$ and $r>0$
such that
\begin{eqnarray}  \label{in:strmsubreg}
     d(x,\bar x)\le\kappa\,\dist{\bar y}{F(x)},\quad\forall x\in\ball{\bar x}{r}.   
\end{eqnarray}
\end{definition}

Roughly speaking, whereas the well-known metric regularity property of
a mapping $F$ at $(\bar x,\bar y)\in\gr F$ can be
viewed as a quantitative form of local solvability for the inclusion
$y\in F(x)$, the strong metric subregularity corresponds to a
quantitative form of local uniqueness for the solution $\bar x$
to the particular inclusion $\bar y\in F(x)$. The independence
of these two properties is illustrated in the next example.

\begin{example}
Let $X=Y=\R$ be equipped with its usual Euclidean metric structure.
Consider the mapping $F_1:\R\rightrightarrows\R$ defined by
\begin{eqnarray*}
   F_1(x)=\left\{\begin{array}{ll}
                  [0,1/2), & \hbox{ if } x=0, \\                                                              
                  {}[1,+\infty),  &  \hbox{ otherwise}.
                  \end{array} \right.
\end{eqnarray*}
Clearly, $F_1$ is strongly metrically subregular at $(0,0)$, with
$\subreg{F_1}{0}{0}=0$, but it fails to be metrically regular
near the same point. Notice that $F_1$ has not closed graph
and it is not semicontinuous.

In the same setting, consider the mapping $F_2:\R\rightrightarrows\R$
defined by $F_2(x)=[x,+\infty)$. This multifunction is metrically
regular near $(0,0)$, but it is not strongly metrically subregular
at the same point.
\end{example}

The basic tool of analysis in use throughout the present section
is introduced in the next definition.

\begin{definition}     \label{def:stdisrate}
(i) Given a function $\varphi:X\longrightarrow\R\cup\{\pm\infty\}$
defined on a metric space and an element $\bar x\in\dom f$,
the value (possibly infinite)
$$
    \grsl{\varphi}(\bar x)=\liminf_{x\to\bar x}{\varphi(x)-\varphi(\bar x)
    \over d(x,\bar x)}
$$
is called the {\it steepest descent rate} of $\varphi$ at $\bar x$.

(ii) Let $F:X\rightrightarrows Y$ be a set-valued mapping between
metric spaces and let $(\bar x,\bar y)\in\gr F$. The (nonnegative,
possibly infinite) quantity
$$
    \cldis{F}(\bar x,\bar y)=\grsl{\dist{\bar y}{F(\cdot)}}(\bar x)
$$
is called the {\it steepest displacement rate} of $F$ at $(\bar x,\bar y)$.
\end{definition}

\begin{remark}
The employment of the steepest descent rate in connection with
variational problems is witnessed since \cite{Mari82}, whereas
its application to nondifferentiable optimization goes back at least
to \cite{Gian89}. It was with V.F. Demyanov that it became steady
exploited for formulating optimality conditions in metric space
settings, as a starting point for more involved nonsmooth analysis
constructions (see \cite{Demy00,Demy05,Demy05b,Demy10,Zasl13,Zasl14}).
The use of the distance function from images of a given multifunction
to characterize its Lipschitzian properties follows the spirit
of \cite{Rock85}.
\end{remark}

A first basic characterization of strong metric subregularity is
established next as a positivity condition on the steepest
displacement rate of a given multifunction.

\begin{proposition}      \label{pro:strmsubregchar}
A set-valued mapping  $F:X\rightrightarrows Y$ is strongly metrically
subregular at $(\bar x,\bar y)\in\gr F$ if and only if
\begin{eqnarray}   \label{in:posstdisrate}
    \cldis{F}(\bar x,\bar y)>0.
\end{eqnarray}
Moreover, whenever inequality (\ref{in:posstdisrate}) holds true,
it results in
\begin{eqnarray}      \label{eq:subregstdisrate}
     \subreg{F}{\bar x}{\bar y}={1\over \cldis{F}(\bar x,\bar y)},
\end{eqnarray}
with the convention that $1/+\infty=0$.
\end{proposition}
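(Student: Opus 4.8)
The plan is to work throughout with the auxiliary function $\varphi(x)=\dist{\bar y}{F(x)}$. Since $(\bar x,\bar y)\in\gr F$ forces $\bar y\in F(\bar x)$ and hence $\varphi(\bar x)=0$, the steepest displacement rate collapses to
$$\cldis{F}(\bar x,\bar y)=\liminf_{x\to\bar x}\frac{\varphi(x)}{d(x,\bar x)},$$
a quantity I would abbreviate by $L$. Because Definition \ref{def:strmsubreg} already hands us the reformulation (\ref{in:strmsubreg}), I would never pass through the solution set when proving the equivalence, and instead read (\ref{in:strmsubreg}) as a lower bound on the difference quotient of $\varphi$: for $\kappa>0$ and any $x\neq\bar x$, the inequality $d(x,\bar x)\le\kappa\,\varphi(x)$ is exactly $\varphi(x)/d(x,\bar x)\ge 1/\kappa$, whereas for $x=\bar x$ it is automatic.

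For the equivalence (\ref{in:posstdisrate}) I would argue both directions from this remark. If $F$ is strongly metrically subregular, I pick $\kappa,r$ as in (\ref{in:strmsubreg}); discarding the trivial case $\kappa=0$ (which forces $\bar x$ to be isolated in $X$ and makes $L=+\infty$ by convention), I divide by $d(x,\bar x)$ and pass to the liminf to get $L\ge 1/\kappa>0$. Conversely, if $L>0$, I fix any level $0<c<L$ (any $c>0$ when $L=+\infty$); by the meaning of the liminf there is $r>0$ with $\varphi(x)/d(x,\bar x)>c$ on the punctured ball $0<d(x,\bar x)<r$, which rearranges to (\ref{in:strmsubreg}) with $\kappa=1/c$ on a slightly smaller closed ball (the center included trivially). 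This delivers strong metric subregularity.

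For the modulus identity (\ref{eq:subregstdisrate}) I would first reconcile the two inequalities (\ref{in:msubreg}) and (\ref{in:strmsubreg}). Strong metric subregularity makes $\bar x$ an isolated point of $F^{-1}(\bar y)$, so there is $\rho>0$ with $F^{-1}(\bar y)\cap\ball{\bar x}{\rho}=\{\bar x\}$; a short triangle-inequality estimate then yields $\dist{x}{F^{-1}(\bar y)}=d(x,\bar x)$ for all $x$ in a ball of radius $\rho/2$. Hence on small enough balls (\ref{in:msubreg}) and (\ref{in:strmsubreg}) are literally the same inequality, and since shrinking $r$ only weakens each condition, the infimum defining $\subreg{F}{\bar x}{\bar y}$ is unaffected by restricting to such radii; thus $\subreg{F}{\bar x}{\bar y}$ equals the infimum of those $\kappa$ for which (\ref{in:strmsubreg}) holds with some $r$. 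Writing $m(r)=\inf_{0<d(x,\bar x)\le r}\varphi(x)/d(x,\bar x)$, this function is nondecreasing as $r\downarrow 0$ with $\lim_{r\to 0^+}m(r)=L$, and a $\kappa>0$ admits a valid radius precisely when $m(r)\ge 1/\kappa$ for some $r$. Therefore every $\kappa>1/L$ works and every $\kappa<1/L$ fails, so the infimum is exactly $1/L$, which is (\ref{eq:subregstdisrate}) under the convention $1/(+\infty)=0$.

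The step I expect to cost the most care is the modulus identity, not the equivalence: one must be scrupulous that the two a priori distinct moduli — the metric-subregularity modulus built from $\dist{x}{F^{-1}(\bar y)}$ and the ``strong'' constant built from $d(x,\bar x)$ — coincide, and this rests entirely on the local identification $\dist{x}{F^{-1}(\bar y)}=d(x,\bar x)$ near the isolated point $\bar x$. The only further delicate point is the boundary value $\kappa=1/L$, where $m(r)=L$ need not be attained; since a single value cannot alter an infimum this is harmless, and the degenerate cases $L=+\infty$ and $\bar x$ isolated in $X$ are absorbed by the stated conventions.
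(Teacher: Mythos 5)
Your proof is correct and follows essentially the same route as the paper's: both directions are read off from the difference quotient of the displacement function $x\mapsto\dist{\bar y}{F(x)}$, using the definition of the lower limit to pass between a constant $\kappa$ valid in (\ref{in:strmsubreg}) and a lower bound on $\cldis{F}(\bar x,\bar y)$. The one point where you are more scrupulous than the paper --- justifying via the local identity $\dist{x}{F^{-1}(\bar y)}=d(x,\bar x)$ near the isolated point $\bar x$ that the modulus defined through (\ref{in:msubreg}) coincides with the infimum of constants admissible in (\ref{in:strmsubreg}) --- is left implicit in the paper's appeal to Definition \ref{def:strmsubreg}(ii), so that addition is a genuine (if minor) improvement rather than a deviation.
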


\begin{proof}
Necessity: according to Definition \ref{def:strmsubreg}(ii), corresponding
to an arbitrary $\kappa>\subreg{F}{\bar x}{\bar y}$ there exists $r>0$
such that
$$
    {\dist{\bar y}{F(x)}\over d(x,\bar x)}\ge{1\over\kappa},
    \quad\forall x\in\ball{\bar x}{r}\backslash\{\bar x\},
$$
whence
$$
    \cldis{F}(\bar x,\bar y)\ge{1\over\kappa}.
$$ 
This evindently implies inequality (\ref{in:posstdisrate})
and, by arbitrariness of $\kappa$, the inequality
\begin{eqnarray}    \label{in:subregge1over}
    \subreg{F}{\bar x}{\bar y}\ge{1\over \cldis{F}(\bar x,\bar y)}.
\end{eqnarray}

Sufficiency: according to Definition \ref{def:stdisrate}(ii),
in the case $\cldis{F}(\bar x,\bar y)=+\infty$,
for every $\eta>0$ there exists $r_\eta>0$ such that
$$
   {1\over\eta}\dist{\bar y}{F(x)}\ge d(x,\bar x),\quad\forall
    x\in\ball{\bar x}{r_\eta}.
$$
Thus $F$ is strongly metrically subregular at $(\bar x,\bar y)$
with $\subreg{F}{\bar x}{\bar y}\le 1/\eta$, what leads to
$\subreg{F}{\bar x}{\bar y}=0$.
In the case $\cldis{F}(\bar x,\bar y)<+\infty$, for an arbitrary
$\epsilon\in (0,\cldis{F}(\bar x,\bar y))$, there is $r_\epsilon>0$
such that
$$
   {1\over\cldis{F}(\bar x,\bar y)-\epsilon}\dist{\bar y}{F(x)}
  \ge d(x,\bar x),\quad\forall  x\in\ball{\bar x}{r_\epsilon},
$$
which shows that $F$ is strongly metrically subregular at
$(\bar x,\bar y)$ and, by arbitrariness of $\epsilon$, leads to
\begin{eqnarray}    \label{in:subregle1over}
  \subreg{F}{\bar x}{\bar y}\le{1\over \cldis{F}(\bar x,\bar y)}.
\end{eqnarray}
Then, as inequalities (\ref{in:subregge1over}) and
(\ref{in:subregle1over}) are both valid now, one obtains
(\ref{eq:subregstdisrate}), thereby completing the proof.
\hfill$\square$
\end{proof}

\begin{remark}     \label{rem:smsubshamin}
Let us recall that after \cite{Poly87} an element $\bar x\in\dom\varphi$ is said to
be a {\it local sharp minimizer} of a function  $\varphi:X\longrightarrow
\R\cup\{\pm\infty\}$ defined on a metric space if there exist positive
$\zeta$ and $r$ such that
$$
    \varphi(x)\ge\varphi(\bar x)+\zeta d(x,\bar x),\quad\forall
    x\in\ball{\bar x}{r}.
$$
Clearly, $\bar x$ is a local sharp minimizer of $\varphi$ if and
only if $\grsl{\varphi}(\bar x)>0$. Thus, on account of Definition
\ref{def:stdisrate} and of Proposition \ref{pro:strmsubregchar},
a set-valued mapping $F$ is strongly metrically subregular at
$(\bar x,\bar y)$ if and only if $\bar x$ is a local sharp minimizer
of the displacement function $x\mapsto\dist{\bar y}{F(x)}$.
Notice that the positivity of the steepest descent rate of a
function is a circumstance essentially connected, in more
structured settings, with nonsmoothness (see \cite{Uder15}).
\end{remark}

Another characterization of the main property under study
for a multifunction $F:X\rightrightarrows Y$ can be obtained
through the following stability behaviour of its inverse $F^{-1}:Y
\rightrightarrows X$, i.e. $F^{-1}(y)=\{x\in X: y\in\ F(x)\}$.

\begin{definition}      \label{def:calmness}
(i) A set-valued mapping $G:X\rightrightarrows Y$ between metric spaces is called
{\it calm} at $(\bar x,\bar y)\in\gr G$ if there exists $\kappa\ge 0$ and
$r>0$ such that
\begin{eqnarray}    \label{in:calm}
      \sup_{y\in G(x)\cap\ball{\bar y}{r}}\dist{y}{G(\bar x)}\le\kappa\, d(x,\bar x),
     \quad\forall x\in\ball{\bar x}{r}.
\end{eqnarray}
Denote by
$$
   \calm{G}{\bar x}{\bar y}=\inf\{\kappa\ge 0:\  \exists r>0
   \hbox{ satisfying } (\ref{in:calm})\}
$$
the {\it calmness modulus} of $G$ at $(\bar x,\bar y)$
($\calmap{G}{\bar x}$ whenever $G$ is a single-valued mapping).

(ii) A set-valued mapping $G:X\rightrightarrows Y$ between metric spaces is
said to have the {\it isolated calmness property} at $(\bar x,\bar y)\in\gr G$
if $G$ is calm at $(\bar x,\bar y)$ and, in addition, $\bar y$ is an isolated
point of $G(\bar x)$.

(iii) A function $\varphi:X\longrightarrow\R\cup\{\pm\infty\}$ is called
{\it calm from below} at $\bar x\in\dom f$ if
$$
   \grsl{\varphi}(\bar x)>-\infty.
$$
\end{definition}

Isolated calmness seems to have made its first formal
appearance in \cite{Dont95}, where it was called ``upper-Lipschitz
property at a point" (see also \cite{DonRoc09}).

\begin{theorem}[\cite{DonRoc09}]
A set-valued mapping  $F:X\rightrightarrows Y$ is strongly metrically
subregular at $(\bar x,\bar y)\in\gr F$ if and only if $F^{-1}$ has the
isolated calmness property at $(\bar y,\bar x)$. In this case, it holds
$$
   \calm{F^{-1}}{\bar y}{\bar x}=\subreg{F}{\bar x}{\bar y}.
$$
\end{theorem}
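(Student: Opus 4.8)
The plan is to prove both implications directly from Definitions \ref{def:strmsubreg}(ii) and \ref{def:calmness}, carrying the constants along so that the modulus identity $\calm{F^{-1}}{\bar y}{\bar x}=\subreg{F}{\bar x}{\bar y}$ drops out as a by-product. The pivotal observation, used in both directions, is that once $\bar x$ is an isolated point of $F^{-1}(\bar y)$, the distance $\dist{x}{F^{-1}(\bar y)}$ coincides with $d(x,\bar x)$ for all $x$ sufficiently close to $\bar x$. Indeed, since $(\bar x,\bar y)\in\gr F$ gives $\bar x\in F^{-1}(\bar y)$, one always has $\dist{x}{F^{-1}(\bar y)}\le d(x,\bar x)$; and choosing $\rho>0$ with $F^{-1}(\bar y)\cap\ball{\bar x}{\rho}=\{\bar x\}$ forces every other $z\in F^{-1}(\bar y)$ to satisfy $d(x,z)\ge d(\bar x,z)-d(x,\bar x)>\rho/2\ge d(x,\bar x)$ whenever $d(x,\bar x)\le\rho/2$, so that $\bar x$ realizes the distance.

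For the implication from strong metric subregularity to isolated calmness, I would first note that $\bar x$ is automatically isolated in $F^{-1}(\bar y)$: if $x\in F^{-1}(\bar y)\cap\ball{\bar x}{r}$ then $\bar y\in F(x)$, so $\dist{\bar y}{F(x)}=0$ and (\ref{in:strmsubreg}) forces $d(x,\bar x)=0$. Then, fixing any $\kappa>\subreg{F}{\bar x}{\bar y}$ with associated radius $r$, I would take $y\in\ball{\bar y}{r}$ and $x\in F^{-1}(y)\cap\ball{\bar x}{r}$; since $y\in F(x)$ one has $\dist{\bar y}{F(x)}\le d(y,\bar y)$, whence $\dist{x}{F^{-1}(\bar y)}\le d(x,\bar x)\le\kappa\,\dist{\bar y}{F(x)}\le\kappa\, d(y,\bar y)$. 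Passing to the supremum over such $x$ yields the calmness inequality (\ref{in:calm}) for $F^{-1}$ with the same $\kappa$ and $r$, so $\calm{F^{-1}}{\bar y}{\bar x}\le\subreg{F}{\bar x}{\bar y}$.

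For the converse, I would start from isolated calmness, choose $\rho$ as above, fix $\kappa>\calm{F^{-1}}{\bar y}{\bar x}$ with radius $r$, set $r'=\min\{\rho/2,\,r,\,\kappa r\}$, and claim (\ref{in:strmsubreg}) holds on $\ball{\bar x}{r'}$. For $x\in\ball{\bar x}{r'}$ with $F(x)=\varnothing$ the inequality is trivial by the convention $\dist{\bar y}{\varnothing}=+\infty$; otherwise I would show $d(\bar y,y)\ge d(x,\bar x)/\kappa$ for every $y\in F(x)$ and take the infimum. For $y\in F(x)\cap\ball{\bar y}{r}$ this follows from calmness applied to $x\in F^{-1}(y)\cap\ball{\bar x}{r}$ together with the identity $\dist{x}{F^{-1}(\bar y)}=d(x,\bar x)$, whereas for $y\notin\ball{\bar y}{r}$ one has $d(\bar y,y)>r\ge r'/\kappa\ge d(x,\bar x)/\kappa$; hence $\dist{\bar y}{F(x)}\ge d(x,\bar x)/\kappa$, i.e. $d(x,\bar x)\le\kappa\,\dist{\bar y}{F(x)}$. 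This gives $\subreg{F}{\bar x}{\bar y}\le\kappa$ for every admissible $\kappa$, so $\subreg{F}{\bar x}{\bar y}\le\calm{F^{-1}}{\bar y}{\bar x}$, and combined with the previous bound the two moduli coincide.

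The routine parts are the triangle-inequality estimates and the bookkeeping of radii. The step I expect to require the most care is the converse direction, where $\dist{\bar y}{F(x)}$ is an infimum over all of $F(x)$ while the calmness estimate is available only for images $y$ lying in $\ball{\bar y}{r}$; the split into a near regime (governed by calmness) and a far regime (governed by the smallness of $r'$ relative to $\kappa r$) is what makes the argument close, and it is precisely here that the isolation hypothesis is indispensable, since it is what upgrades $\dist{x}{F^{-1}(\bar y)}$ to the exact value $d(x,\bar x)$.
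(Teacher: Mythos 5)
Your argument is correct and complete. Note, however, that the paper itself offers no proof of this statement: it is quoted verbatim from Dontchev--Rockafellar \cite{DonRoc09}, so there is no in-text argument to compare yours against. What you have written is the standard direct verification, and you have handled the two genuinely delicate points properly: first, the isolation of $\bar x$ in $F^{-1}(\bar y)$ is what upgrades the trivial bound $\dist{x}{F^{-1}(\bar y)}\le d(x,\bar x)$ to an equality near $\bar x$, which is exactly what the converse direction needs; second, in that converse direction the infimum defining $\dist{\bar y}{F(x)}$ ranges over all of $F(x)$ while calmness only controls images in $\ball{\bar y}{r}$, and your choice $r'\le\kappa r$ correctly disposes of the far regime. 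The forward direction, as you implicitly note, needs only the one-sided inequality $\dist{x}{F^{-1}(\bar y)}\le d(x,\bar x)$, so the modulus identity $\calm{F^{-1}}{\bar y}{\bar x}=\subreg{F}{\bar x}{\bar y}$ indeed falls out of the two one-sided estimates. No gaps.
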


Below several situations, connecting different topics of optimization
and variational analysis, are illustrated, where the strong metric
subregularity naturally emerges. A further relevant motivation for
being interested in strong metric subregularity has to do with
the analysis of the solution mapping to generalized equations.
This topic will be discussed in Section \ref{sec:5}.

\begin{example}
From Remark \ref{rem:smsubshamin} it should be clear that
every scalar function $\varphi:X\longrightarrow\R\cup\{\pm\infty\}$
defined in a metric space is strongly metrically subregular
at each of its local sharp minimizers (if any). As an obvious
consequence, the related epigraphical set-valued mapping
$F_\varphi:X\rightrightarrows\R$, defined as
$$
   F_\varphi(x)=[\varphi(x),+\infty),
$$
is strongly metrically subregular at $(\bar x,\varphi(\bar x))$,
whenever $\bar x$ is a local sharp minimizer of $\varphi$.
It is worth noting that if $\varphi$ is calm from below at $\bar x$,
then it can be perturbed in such a way to have the point $\bar x$ as
a local sharp minimizer. Indeed, if for some $l>0$ it is
$\grsl{\varphi}(\bar x)>-l$, then function $\varphi+ ld(\cdot,\bar x)$
admits a local sharp minimizer at $\bar x$. Now, for a lower
semicontinuous (henceforth, for short, l.s.c.) proper function
$\varphi:X\longrightarrow\R\cup\{+\infty\}$ defined in a complete
metric space, the set of all points at which $\varphi$ is calm from
below is large enough. In fact, as a direct consequence of the
Ekeland variational principle, it is possible to prove that such
set is dense in $\dom\varphi$. All of this should show that it is
not difficult to generate situations where strong metric subregularity
appears.
\end{example}

\begin{example}
Let $\Lin(\X,\Y)$ denote the space of all linear bounded operators
between two normed spaces, having null vector $\nullv$. Given
$\Lambda\in\Lin(\X,\Y)$, its {\it injectivity constant} is defined as
$$
    \alpha(\Lambda)=\inf_{\|u\|=1}\|\Lambda u\|
$$
(see, for instance, \cite{Peno13}). By linearity, for any pair
$(\bar x,\bar y)\in\X\times\Y$, with $\bar y=\Lambda\bar x$,
one finds
\begin{eqnarray}    \label{eq:injconlinop}
   \cldis{\Lambda}(\bar x,\bar y)=\cldis{\Lambda}(\nullv,\nullv)=
   \liminf_{x\to \nullv}{\|\Lambda x\|\over\|x\|}=\alpha(\Lambda).
\end{eqnarray}
While any bounded linear operator $\Lambda$ is known to be
metric subregular at each point of its graph, according to
Proposition \ref{pro:strmsubregchar} it is strongly metrically
subregular iff $\alpha(\Lambda)>0$ and
$$
    \subreg{\Lambda}{\bar x}{\bar y}=\subreg{\Lambda}
    {\nullv}{\nullv}={1\over\alpha(\Lambda)}.
$$
Notice that, whenever $\X$ and $\Y$
are finite-dimensional spaces, $\alpha(\Lambda)>0$ holds iff
$\Kern\Lambda=\Lambda^{-1}(\nullv)=\{\nullv\}$, that is iff
$\Lambda$ is injective. This fact fails to remain true in abstract
normed space. Consider, for instance, the identity operator
${\rm Id}:\ell^1\longrightarrow\ell^\infty$ (the immersion of $\ell^1$
into $\ell^\infty$), which is injective,
and define for each $n\in\N$ the elements $x^n=\{x_k^n\}\in\ell^1$
as follows
\begin{eqnarray*}
     x_k^n=\left\{\begin{array}{ll}
                    1/n, & \hbox{ for } 1\le k\le n, \\
                     0, &  \hbox{ for } k\ge n+1,
                  \end{array} \right. \qquad n\in\N.
\end{eqnarray*}
It is clear that $\|x^n\|_{\ell^1}=1$, whereas $\|x^n\|_{\ell^\infty}
=1/n$ for every $n\in\N$. Consequently, one has
$$
    \alpha({\rm Id})=\inf_{\|u\|_{\ell^1}=1}\|{\rm Id}\, u\|_{\ell^\infty}\le
   \inf_{n\in\N}\|x^n\|_{\ell^\infty}=0,
$$
so ${\rm Id}$ is not strongly metrically subregular (anywhere).
On the other hand, it is clear that ${\rm Id}:\ell^1\longrightarrow
\ell^1$ is strongly metrically subregular, with $\subregmap{{\rm Id}}
{x}=1$, for every $x\in\ell^1$.

Recall that the
injectivity constant is connected with the Banach constant of linear
operators, through transposition. Namely, given $\Lambda(\X,\Y)$,
one defines
$$
    \beta(\Lambda)=\alpha(\Lambda^\top),
$$
where $\Lambda^\top$ stands for the transpose of $\Lambda$.
This allows one to link the notion of strong metric subregularity
with that of metric regularity, which is well known to amount to
openness (at a linear rate) in the case of linear operators.
More precisely, whenever $\Lambda$ is open, one has
$0<\beta(\Lambda)=\alpha(\Lambda^\top)$, so that $\Lambda^\top$
is strongly metrically subregular. For more details see \cite{Peno13}.
The current example helps also to illustrate the fact that,
whereas the appearance of sharp minimality for a given function
is a syntom of nonsmoothness, strong metric subregularity is a
property that may happen to take place for very nice (even linear)
mappings.
\end{example}

\begin{example}     \label{ex:smsubregex3}
Let $\varphi:\X\longrightarrow\R\cup\{+\infty\}$ be a proper, l.s.c.
convex function defined on a Banach space $\X$, whose dual is
indicated by $\X^*$. Let us denote by $\partial\varphi(\bar x)$ the
subdifferential of $\varphi$ at $\bar x\in\dom\varphi$
in the sense of convex analysis.
Generalizing a previous result valid in Hilbert spaces, in \cite{AraGeo14}
it is has been proved that the set-valued mapping $\partial\varphi:
\X\rightrightarrows\X^*$ is strongly metrically subregular at
$(\bar x,\bar x^*)\in\gr\partial\varphi$ if and only if there exist
positive $\gamma$ and $r$ such that
$$
    \varphi(x)\ge\varphi(\bar x)+\langle\bar x^*,x-\bar x\rangle+
    \gamma\|x-\bar x\|^2,\quad\forall x\in\ball{\bar x}{r},
$$
where $\langle\cdot,\cdot\rangle:\X^*\times\X\longrightarrow\R$
denotes the duality pairing $\X^*$ and $\X$.
In particular, in the case of a (global) minimizer $\bar x$ of
$\varphi$, $\partial\varphi$ is strongly metrically subregular at
$(\bar x,\nullv^*)$, where $\nullv^*$ stands for the null vector
of $\X^*$, iff
$$
    \varphi(x)\ge\varphi(\bar x)+\gamma\|x-\bar x\|^2,\quad\forall
    x\in\ball{\bar x}{r}.
$$
The last inequality formalizes a variational behaviour known 
as {\it quadratic growth condition}, which has been studied in
connection with second-order sufficient conditions in nonlinear
programming (see \cite{BonSha00}). Notice that, if a function admits
a sharp minimizer, it satisfies the quadratic growth condition
around that point, but the converse may not be true. 
Similar characterizations
of various metric regularity properties have been recently extended
to the Mordukhovich subdifferential mapping (see \cite{MorNgh13}).
Investigations by means of second-order variational analysis tools
revealed that they are also interrelated to the tilt-stability of local
minimizer (see \cite{MorNgh13,DrMoNg15}).
\end{example}

\begin{remark}
From Example \ref{ex:smsubregex3} it is possible to see at once
that if $\varphi$ is a proper, l.s.c. convex function, whose subdifferential
mapping is strongly metrically subregular at $(\bar x,\nullv^*)$,
where $\bar x\in\dom\varphi$ is a minimizer of $\varphi$, then
$\bar x$ turns out to be Tykhonov well-posed, namely every minimizing
sequence $\{x_n\}$ of $\varphi$ converges to $\bar x$. Now, it is
worth noting that the notion of strong metric subregularity
generalizes, yet in a local form, such a behaviour to solutions of
equations/inclusions. More precisely, if a set-valued mapping
$F:X\rightrightarrows Y$,
defining with $\bar y$ the inclusion $\bar y\in F(x)$, is strongly
metrically subregular at $(\bar x,\bar y)$, then for every sequence
$\{y_n\}$ in $Y$, with $y_n\longrightarrow\bar y$ as $n\to\infty$,
and for every sequence $\{x_n\}$ in $X$ of solutions of the
inclusions with data perturbed $y_n\in\ F(x)$, according to
(\ref{in:strmsubreg}) one finds
$$
   d(x_n,\bar x)\le\kappa\dist{\bar y}{F(x_n)}\le\kappa
    d(\bar y,y_n),
$$
so $x_n\longrightarrow\bar x$, provided that the elements
of $\{x_n\}$ fall in a proper neighbourhood of $\bar x$.
\end{remark}


\section{Perturbation stability}
\label{sec:3}

As it happens for other Lipschitzian properties of multifunctions,
a method for establishing criteria or conditions for the validity
of strong metric subregularity consists in analyzing its stability
in the presence of perturbations. Two results of this type
are presented in what follows, which are both proved through
the criterion discussed in Section \ref{sec:2}.

\begin{theorem}
Let $F:X\rightrightarrows Y$ be a set-valued mapping between
metric spaces and let $g:Z\longrightarrow X$ a mapping defined
on a metric space. Let $\bar z\in Z$ and let $(g(\bar z),\bar y)\in
\gr F$. Suppose that:

\noindent (i) $g$ is continuous at $\bar z$ and strongly metrically
subregular at $\bar z$;

\noindent (ii) $F$ is strongly metrically subregular at $(g(\bar z),
\bar y)$.

\noindent Then, their composition $F\circ g:Z\rightrightarrows
Y$ is strongly metrically subregular at $(\bar z,\bar y)$, and it
results in
$$
    \subreg{(F\circ g)}{\bar z}{\bar y}\le\subregmap{g}{\bar z}
    \cdot\subreg{F}{g(\bar z)}{\bar y}.
$$
\end{theorem}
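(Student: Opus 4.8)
The plan is to derive the statement entirely from the steepest-displacement-rate criterion of Proposition \ref{pro:strmsubregchar}, which turns the whole matter into a single lower estimate for $\cldis{(F\circ g)}(\bar z,\bar y)$. Since $(g(\bar z),\bar y)\in\gr F$ one has $\bar y\in F(g(\bar z))=(F\circ g)(\bar z)$, so $(\bar z,\bar y)\in\gr(F\circ g)$ and the displacement function of $F\circ g$ vanishes at $\bar z$; consequently
$$
   \cldis{(F\circ g)}(\bar z,\bar y)=\liminf_{z\to\bar z}
   {\dist{\bar y}{F(g(z))}\over d(z,\bar z)}.
$$
The core idea is to split this quotient, for $z\neq\bar z$ close to $\bar z$, as the product
$$
   {\dist{\bar y}{F(g(z))}\over d(g(z),g(\bar z))}\cdot
   {d(g(z),g(\bar z))\over d(z,\bar z)}
$$
and to take the $\liminf$ in each factor separately.

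First I would check that the factorization is legitimate. By hypothesis (i) and (\ref{in:strmsubreg}) there are $\kappa\ge 0$ and $r>0$ with $d(z,\bar z)\le\kappa\, d(g(z),g(\bar z))$ on $\ball{\bar z}{r}$, so $g(z)\neq g(\bar z)$ whenever $0<d(z,\bar z)\le r$ and no denominator above can vanish. The second factor then has $\liminf$ equal to $\cldis{g}(\bar z,g(\bar z))$, which by Proposition \ref{pro:strmsubregchar} is strictly positive and equals $1/\subregmap{g}{\bar z}$.

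The first factor is where continuity of $g$ intervenes, and this is the step I expect to carry the real weight. Setting $h(x)=\dist{\bar y}{F(x)}/d(x,g(\bar z))$ for $x\neq g(\bar z)$, one has $\cldis{F}(g(\bar z),\bar y)=\liminf_{x\to g(\bar z)}h(x)$. As $z\to\bar z$ with $z\neq\bar z$, continuity of $g$ forces $g(z)\to g(\bar z)$, while the subregularity of $g$ just used guarantees $g(z)\neq g(\bar z)$; thus the net $g(z)$ approaches $g(\bar z)$ through admissible points, and the general fact that a $\liminf$ taken along such a net dominates the full $\liminf$ gives $\liminf_{z\to\bar z}h(g(z))\ge\cldis{F}(g(\bar z),\bar y)$. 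By hypothesis (ii) and Proposition \ref{pro:strmsubregchar}, this last quantity equals $1/\subreg{F}{g(\bar z)}{\bar y}>0$.

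Finally I would combine the two factors through the elementary inequality $\liminf(a_zb_z)\ge(\liminf a_z)(\liminf b_z)$ for nonnegative quantities, heeding the conventions on $+\infty$ (the degenerate cases occur exactly when one modulus vanishes, i.e. one rate is $+\infty$, and there is no indeterminate $0\cdot\infty$ since both factor liminfs are strictly positive). This yields
$$
   \cldis{(F\circ g)}(\bar z,\bar y)\ge\cldis{F}(g(\bar z),\bar y)\cdot
   \cldis{g}(\bar z,g(\bar z))>0,
$$
so $F\circ g$ is strongly metrically subregular at $(\bar z,\bar y)$ by Proposition \ref{pro:strmsubregchar}, and passing to reciprocals in the last display delivers precisely the claimed product bound on $\subreg{(F\circ g)}{\bar z}{\bar y}$. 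The only genuine obstacle is the first-factor step: it is tempting but false to treat the two quotients as independent limits, and the argument must lean on continuity of $g$ to secure $g(z)\to g(\bar z)$ and on the strong subregularity of $g$ to secure $g(z)\neq g(\bar z)$, without which both the factorization and the net argument break down.
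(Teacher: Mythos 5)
Your proof is correct and follows essentially the same route as the paper: both arguments bound the difference quotient of $z\mapsto\dist{\bar y}{F(g(z))}$ from below by splitting it through the intermediate quantity $d(g(z),g(\bar z))$, using continuity of $g$ to land $g(z)$ in the ball where the subregularity estimate for $F$ applies, and then conclude via Proposition \ref{pro:strmsubregchar}. The only cosmetic difference is that you phrase the combination as $\liminf$ of a product versus product of $\liminf$s (which obliges you to verify $g(z)\neq g(\bar z)$, correctly done), whereas the paper chains two explicit inequalities with moduli $\kappa_F,\kappa_g$ and never needs to exclude that degeneracy.
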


\begin{proof}
Set $\bar x=g(\bar z)$. Since $F$ is strongly metrically subregular
at $(\bar x,\bar y)$, corresponding to an arbitrary $\kappa_F>
\subreg{F}{\bar x}{\bar y}$, there exists $r>0$ such that
\begin{eqnarray} \label{in:smsubregF}
   \dist{\bar y}{F(x)}\ge{1\over\kappa_F}\, d(x,\bar x),\quad\forall
   x\in\ball{\bar x}{r}.
\end{eqnarray}
Owing to hypothesis (i), corresponding to an arbitrary $\kappa_g>
\subregmap{g}{\bar x}$, there exists $\delta>0$ such that, up
to a further reduction of its value, if neeeded,
\begin{eqnarray}   \label{in:smsubregcong}
   d(\bar x,g(z))\ge{1\over \kappa_g}\, d(z,\bar z) \qquad\hbox{and}\qquad
    g(z)\in\ball{\bar x}{r},\quad\forall z\in\ball{\bar z}{\delta}.
\end{eqnarray}
From inequalities (\ref{in:smsubregcong}) and (\ref{in:smsubregF}),
one obtains
$$
   {\dist{\bar y}{F(g(z))}\over d(z,\bar z)}\ge {d(g(z),\bar x)
   \over \kappa_Fd(z,\bar z)}\ge {1\over \kappa_F\kappa_g},
   \quad\forall z\in\ball{\bar z}{\delta}\backslash\{\bar z\},
$$
whence $\cldis{F\circ g}(\bar z,\bar y)\ge (\kappa_F\kappa_g)^{-1}
>0$ follows. To complete the proof it remains to apply
Proposition \ref{pro:strmsubregchar}.
\hfill $\square$
\end{proof}

The following example shows that the continuity assumption
on the inner mapping can not be dropped out, in general.

\begin{example}      \label{ex:compcont}
Let $Z=X=Y=\R$ be endowed with the usual Euclidean metric structure.
Consider the functions $g:\R\longrightarrow\R$ and $F:\R\longrightarrow\R$,
defined respectively by
\begin{eqnarray*}
     g(z)=\left\{\begin{array}{ll}
                    0, & \hbox{ if } z=0, \\
                     2, &  \hbox{ otherwise}, 
                  \end{array} \right. \qquad\hbox{ and }\qquad
      F(x)=\left\{\begin{array}{ll}
                    |x|, & \hbox{ if } |x|\le 1, \\
                     2-|x|, &  \hbox{ otherwise}. 
                  \end{array} \right.
\end{eqnarray*}
Here $\bar z=\bar x=\bar y=0$.
It is evident that both $g$ and $F$ have a sharp minimizer
at $0$, whereas their composition $F\circ g\equiv 0$
does not.
\end{example}

Strong metric subregularity is not preserved under composition
of set-valued mappings, as shown by the next counterexample.

\begin{example}
In the same setting as in Example \ref{ex:compcont}, let $G:\R
\rightrightarrows\R$ be defined by
\begin{eqnarray*}
     G(z)=\left\{\begin{array}{ll}
                    \R, & \hbox{ if } z=0, \\
                     \{2\}, &  \hbox{ otherwise}, 
                  \end{array} \right. 
\end{eqnarray*}
and let $F$ be as in the previous example. As one readily checks,
$G$ is strongly metrically subregular at $(0,0)$. If composing $G$
and $F$, one finds
\begin{eqnarray*}
     (F\circ G)(z)=\left\{\begin{array}{ll}
                    (-\infty,1], & \quad\hbox{ if } z=0, \\
                     \{0\}, &  \quad\hbox{ otherwise}. 
                  \end{array} \right. 
\end{eqnarray*}
It is easily seen that $F\circ G$ fails to be strongly metrically
subregular at $(0,0)$. Notice that multifunction $G$ is upper
hemicontinuous at $0$.
\end{example}

For the next result, a slightly more structured setting is needed.

\begin{theorem}     \label{thm:smsubregstacalm}
Let $F:X\rightrightarrows Y$ be a set-valued mapping
defined on a metric space $X$ and taking values in a linear
metric space $Y$, whose metric is shift invariant.
If $F$ is strongly metrically subregular at $(\bar x,\bar y)\in\gr F$,
then for any mapping
$g:X\longrightarrow Y$ such that $\subreg{F}{\bar x}{\bar y}\cdot
\calmap{g}{\bar x}<1$, then the set-valued mapping $F+g$ is
strongly metrically subregular at $(\bar x,\bar y+g(\bar x))$
and it results in
\begin{eqnarray}   \label{in:estsubregpert}
  \subreg{(F+g)}{\bar x}{\bar y+g(\bar x)}\le{\subreg{F}{\bar x}{\bar y}
  \over 1-\subreg{F}{\bar x}{\bar y}\cdot\calmap{g}{\bar x}}.
\end{eqnarray}
\end{theorem}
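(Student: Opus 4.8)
The plan is to reduce the whole statement to the steepest-displacement-rate characterization of Proposition \ref{pro:strmsubregchar}, so that the only thing I really need is a lower bound on the steepest displacement rate of the perturbed mapping. Writing $\tilde y=\bar y+g(\bar x)$ for brevity, the decisive first step is to use the shift invariance of the metric on $Y$ to rewrite the displacement function of $F+g$ at $\tilde y$ as a displacement function of $F$ at a \emph{moving} target point. Since $(F+g)(x)=F(x)+g(x)$, shift invariance gives, for every $x$,
$$
   \dist{\tilde y}{(F+g)(x)}=\dist{\bar y+g(\bar x)-g(x)}{F(x)}.
$$
This identity is the crux of the argument: it converts the additive perturbation of the mapping into an additive perturbation of the reference point $\bar y$ by the displacement $g(\bar x)-g(x)$, whose magnitude is exactly what the calmness of $g$ controls.

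Next I would invoke the elementary $1$-Lipschitz dependence of the distance function on its first argument, combined once more with shift invariance, to get
$$
   \dist{\bar y+g(\bar x)-g(x)}{F(x)}\ge\dist{\bar y}{F(x)}-d(g(x),g(\bar x)).
$$
Then, in the same spirit as the $\kappa$-approximation used in the proof of the composition theorem, I would fix $\kappa_F>\subreg{F}{\bar x}{\bar y}$ and $\kappa_g>\calmap{g}{\bar x}$ chosen close enough to their respective moduli that $\kappa_F\kappa_g<1$; this is possible precisely because of the standing hypothesis $\subreg{F}{\bar x}{\bar y}\cdot\calmap{g}{\bar x}<1$. By the definition of $\subreg{F}{\bar x}{\bar y}$ there is $r>0$ with $\dist{\bar y}{F(x)}\ge\kappa_F^{-1}d(x,\bar x)$ on $\ball{\bar x}{r}$, and by the definition of $\calmap{g}{\bar x}$ there is $\delta>0$ with $d(g(x),g(\bar x))\le\kappa_g\,d(x,\bar x)$ on $\ball{\bar x}{\delta}$. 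Combining the three displayed estimates on $\ball{\bar x}{\min\{r,\delta\}}\setminus\{\bar x\}$ and dividing by $d(x,\bar x)$ yields
$$
   \frac{\dist{\tilde y}{(F+g)(x)}}{d(x,\bar x)}\ge\frac{1}{\kappa_F}-\kappa_g=\frac{1-\kappa_F\kappa_g}{\kappa_F}>0,
$$
so that $\cldis{F+g}(\bar x,\tilde y)\ge(1-\kappa_F\kappa_g)/\kappa_F>0$.

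With positivity established, Proposition \ref{pro:strmsubregchar} immediately delivers strong metric subregularity of $F+g$ at $(\bar x,\tilde y)$ and, through \eqref{eq:subregstdisrate}, the bound $\subreg{(F+g)}{\bar x}{\tilde y}\le\kappa_F/(1-\kappa_F\kappa_g)$. Letting $\kappa_F\downarrow\subreg{F}{\bar x}{\bar y}$ and $\kappa_g\downarrow\calmap{g}{\bar x}$ then produces exactly the estimate \eqref{in:estsubregpert}. I expect the only genuine subtlety to reside in the very first step: the clean rewriting of the displacement function rests on the metric of $Y$ being shift invariant, since otherwise $\dist{\tilde y}{(F+g)(x)}$ cannot be decoupled into a distance to the unperturbed images $F(x)$; this is where the structural assumption on $Y$ is truly needed. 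A secondary, purely technical point is that working with the $\kappa$-approximations instead of directly with $\liminf$ and $\limsup$ conveniently bypasses the inequality $\liminf(a-b)\ge\liminf a-\limsup b$ and keeps the estimate transparent.
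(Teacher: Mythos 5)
Your proof is correct and follows essentially the same route as the paper: both arguments reduce the claim to Proposition \ref{pro:strmsubregchar} via the same shift-invariance decomposition $\dist{\bar y+g(\bar x)}{F(x)+g(x)}\ge\dist{\bar y}{F(x)}-d(g(x),g(\bar x))$, and then bound $\cldis{F+g}(\bar x,\bar y+g(\bar x))$ from below by $1/\subreg{F}{\bar x}{\bar y}-\calmap{g}{\bar x}$. The only difference is presentational: the paper works directly with $\liminf$ and $\limsup$, while you pass through explicit constants $\kappa_F,\kappa_g$ and let them tend to the moduli, which yields the same estimate.
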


\begin{proof}
Notice that, by virtue of the shift-invariance of the metric on $Y$,
for any $x\in X$ one has
$$
   \dist{\bar y}{F(x)}\le\dist{\bar y+g(\bar x)}{F(x)+g(x)}+
   d(g(x),g(\bar x)).
$$
Consequently, one obtains
\begin{eqnarray*} 
   \cldis{F+g}(\bar x,\bar y+g(\bar x))&\ge&\liminf_{x\to\bar x}
   {\dist{\bar y}{F(x)}-d(g(x),g(\bar x))\over d(x,\bar x)}  \\
   &\ge& \cldis{F}(\bar x,\bar y)-\limsup_{x\to\bar x}
   {d(g(x),g(\bar x))\over d(x,\bar x)} \\
   &\ge &{1\over\subreg{F}{\bar x}{\bar y}}-\calmap{g}{\bar x}>0.
\end{eqnarray*} 
The strong metric subregularity of $F+g$ at $(\bar x,\bar y
+g(\bar x))$ follows at once by the characterization
provided in Proposition \ref{pro:strmsubregchar},
whereas the estimate (\ref{in:estsubregpert}) is a
straightforward consequence of (\ref{eq:subregstdisrate}).
\hfill $\square$
\end{proof}

\begin{remark}
(i) The result provided by Theorem \ref{thm:smsubregstacalm}
on the persistence of strong metric subregularity under calm additive
perturbations can be found in \cite{DonRoc09} (see Theorem 3I.6),
formulated for multifunctions acting in finite-dimensional spaces,
with a different proof. It is worth noting that, since neither the
Ekeland variational principle nor the convergence of iteration
procedures are used in the proof of Theorem \ref{thm:smsubregstacalm},
metric completeness plays no role in the above robustness
phenomenon. Instead, it may be viewed as a direct consequence of
a stability behaviour for the local sharp minimality called
{\it superstability}, which was observed already by B.T. Polyak
(see \cite{Poly87}). Essentially, it means that a point preserves
its local minimality even in the presence of additive calm perturbations.
 
This makes the robustness of strong metric
subregularity different from the corresponding behaviour of
metric regularity, requiring on one hand metric completeness
and on the other hand the Lipschitz property of perturbations (see
\cite{DonRoc09,Mord06}).

(ii) Note that the shift-invariance assumption on the metric of $Y$
is not actually restrictive. Indeed, a result due to Kakutani ensures that
any linear metric space can be equivalently remetrized by a
shift-invariant metric (see Theorem 2.2.11 in \cite{Role87}).
\end{remark}


\section{Strong metric subregularity of nonsmooth mappings}
\label{sec:4}

The main subject of this paper is the strong metric subregularity
of (possibly) nonsmooth mappings $f:\X\longrightarrow\Y$.
To deal with them, throughout
this section $(\X,\|\cdot\|)$ and $(\Y,\|\cdot\|)$ are supposed to be
normed (vector) spaces. The (closed) unit ball and the unit sphere
in any normed space are indicated by $\Uball$ and $\Usfer$,
respectively, whereas, in the case of dual spaces, by $\Uball^*$ and
$\Usfer^*$, respectively.

\subsection{A criterium via first-order $\epsilon$-approximations}

Differentiability is a wise combination of linearity and approximation.
The approach of analysis considered in this subsection relies on
the employment of positively homogeneous (for short, p.h.) mappings
as an appealing substitute of derivatives (that are linear operators),
while calmness replaces the classical convergence of the
remainder term. To do so, set
$$
    \Hom(\X,\Y)=\{h:\X\longrightarrow\Y:\ \hbox{ p.h. and continuous at }
   \nullv\}.
$$

\begin{definition}    \label{def:epsapprox}
Let $f:\X\longrightarrow\Y$ be a mapping between normed spaces,
let $\bar x\in\X$ and let $\epsilon>0$. A mapping $h\in\Hom(\X,\Y)$
is said to be a {\it first-order $\epsilon$-approximation} of $f$ at $\bar x$ if
$$
    \calmap{(f-h(\cdot-\bar x))}{\bar x}<\epsilon.
$$
\end{definition}

\begin{remark}
Whenever $h$ is a first-order $\epsilon$-approximation of $f$
at $\bar x$, the mapping $f(\bar x)+h(\cdot-\bar x)$ is a special case
of what is called in \cite{DonRoc09} an ``estimator". Of course, first-order
$\epsilon$-approximation is a nonsmooth analysis notion, which
allows to include (Fr\'echet) differentiability. Indeed, note that if $f$
is Fr\'echet differentiable at $\bar x$, with derivative $\FDer
f(\bar x)\in\Lin(\X,\Y)$,
then $\FDer f(\bar x)$ is a first-order $\epsilon$-approximation
of $f$ at $\bar x$, for every $\epsilon>0$. P.h. functions and
mappings, or some special classes of them, have been utilized
as a rough material for constructing generalized derivatives
since the very birth of nonsmooth analysis (see, for instance,
\cite{Robi87,DemRub95}).
On the other hand, the idea of studying properties of nonlinear
mappings by means of ``approximate differentials", which avoid
differentiability assumptions, precedes even nonsmooth analysis
(see, for instance, \cite{HilGra27}).
\end{remark}

After having replaced linear operators with p.h. mappings,
the next step consists in extending to $\Hom(\X,\Y)$ the definition
of injective constant, by letting
$$
   \alphao(h)=\inf_{\|u\|=1}\|h(u)\|.
$$

\begin{remark}
By applying Proposition \ref{pro:strmsubregchar} it is readily seen
that $h\in\Hom(\X,\Y)$ is strongly metrically subregular at $\nullv$
if and only if it holds $\alphao(h)>0$. It is to be noted however
that, in contrast with the linear case, such a characterization
is not valid for the strong metric subregularity of $h$ at every point
of $\X$. Consider, for instance, the norm  function $\|\cdot\|:\X
\longrightarrow\R$, with the dimension of $\X$ being greater than $1$
(possibly infinite). Clearly, it is $\alphao(\|\cdot\|)=1$. Taking any element
$\bar u$ in the unit sphere $\Usfer$ of $\X$, one finds
\begin{eqnarray*}
    \grsl{\|\cdot\|}(\bar u)=\liminf_{x\to\bar u}{|\|x\|-\|\bar u\||
    \over \|x-\bar u\|}\le\sup_{\delta>0}\ \inf_{x\in\Usfer\cap\ball{\bar u}{\delta}
    \backslash\{\bar u\}}{|\|x\|-\|\bar u\||\over \|x-\bar u\|}=0,
\end{eqnarray*}
and hence $\|\cdot\|$ fails to be strongly metrically subregular
at $\bar u$, even though it is so at $\nullv$.
\end{remark}

\begin{theorem}     \label{them:epsapproxchar}
Let $f:\X\longrightarrow\Y$ be a mapping between normed
spaces and let $\bar x\in\X$. If $f$ is first-order $\epsilon$-approximated
at $\bar x$ by $h\in\Hom(\X,\Y)$, with $\alphao(h)>\epsilon$,
then $f$ is strongly metrically subregular at $\bar x$, and
$$
   \subregmap{f}{\bar x}\le {1\over \alphao(h)-\epsilon}.
$$
Vice versa, if $f$ is strongly metrically subregular at $\bar x$, then
for any mapping $h\in\Hom(\X,\Y)$ first-order $\epsilon$-approximating $f$ at
$\bar x$, with $\epsilon< \subregmap{f}{\bar x}$, it results in
$$
    \alphao(h)\ge {1\over \subregmap{f}{\bar x}-\epsilon},
$$
so $h$ is strongly metrically subregular at $\nullv$.
\end{theorem}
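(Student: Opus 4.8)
The plan is to prove both implications by reducing strong metric subregularity to the positivity of a steepest displacement rate (Proposition \ref{pro:strmsubregchar}) and then comparing two such rates through the triangle inequality. Throughout I would write $\bar y=f(\bar x)$, so that the displacement function of the single-valued $f$ is $x\mapsto\|f(x)-f(\bar x)\|$ and $\cldis{f}(\bar x,\bar y)=\liminf_{x\to\bar x}\|f(x)-f(\bar x)\|/\|x-\bar x\|$. First I would record two identities. Since $h$ is positively homogeneous, $\|h(u)\|/\|u\|=\|h(u/\|u\|)\|\ge\alphao(h)$ for every $u\ne\nullv$, with equality approached along minimizing directions; hence $\cldis{h}(\nullv,\nullv)=\liminf_{u\to\nullv}\|h(u)\|/\|u\|=\alphao(h)$, which by Proposition \ref{pro:strmsubregchar} already gives ``$h$ strongly metrically subregular at $\nullv$ if and only if $\alphao(h)>0$'' together with $\subregmap{h}{\nullv}=1/\alphao(h)$. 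Likewise $f$ is strongly metrically subregular at $\bar x$ iff $\cldis{f}(\bar x,\bar y)>0$, in which case $\subregmap{f}{\bar x}=1/\cldis{f}(\bar x,\bar y)$. Everything thus reduces to estimating $\cldis{f}(\bar x,\bar y)$ against $\alphao(h)$, the bridge being the remainder $g:=f-h(\cdot-\bar x)$, which satisfies $g(\bar x)=f(\bar x)$ and, by the definition of first-order $\epsilon$-approximation, $\calmap{g}{\bar x}<\epsilon$.

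For the sufficiency direction I would write $\|f(x)-f(\bar x)\|\ge\|h(x-\bar x)\|-\|g(x)-g(\bar x)\|$, divide by $\|x-\bar x\|$, and pass to the $\liminf$ as $x\to\bar x$. Using $\|h(x-\bar x)\|/\|x-\bar x\|\ge\alphao(h)$, the value $\limsup_{x\to\bar x}\|g(x)-g(\bar x)\|/\|x-\bar x\|=\calmap{g}{\bar x}<\epsilon$, and the rule $\liminf(A-B)\ge\liminf A-\limsup B$, one gets $\cldis{f}(\bar x,\bar y)\ge\alphao(h)-\calmap{g}{\bar x}>\alphao(h)-\epsilon>0$, the last strict inequality being the hypothesis $\alphao(h)>\epsilon$. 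Proposition \ref{pro:strmsubregchar} then yields strong metric subregularity of $f$ at $\bar x$ and $\subregmap{f}{\bar x}=1/\cldis{f}(\bar x,\bar y)\le 1/(\alphao(h)-\epsilon)$, which is exactly the asserted estimate.

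The converse direction is where the only genuine subtlety lies, because $\alphao(h)$ is an infimum over the entire unit sphere, whereas strong metric subregularity of $f$ and the smallness of $g$ are purely local data around $\bar x$. The device that globalizes is again positive homogeneity: I would fix an arbitrary $u\in\Usfer$ and probe along the ray $x=\bar x+tu$, $t\downarrow 0$, along which $\|h(x-\bar x)\|/\|x-\bar x\|=\|h(tu)\|/t=\|h(u)\|$ is constant. The reverse triangle inequality $\|h(tu)\|\ge\|f(\bar x+tu)-f(\bar x)\|-\|g(\bar x+tu)-g(\bar x)\|$, divided by $t$, gives $\|h(u)\|\ge\|f(\bar x+tu)-f(\bar x)\|/t-\|g(\bar x+tu)-g(\bar x)\|/t$ for all small $t$. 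Strong metric subregularity of $f$ supplies the uniform bound $\|f(x)-f(\bar x)\|/\|x-\bar x\|\ge 1/\kappa$ on $\ball{\bar x}{r}$ for every $\kappa>\subregmap{f}{\bar x}$, while the remainder ratio is eventually $<\epsilon$; since $\|(\bar x+tu)-\bar x\|=t$ does not depend on $u$, both estimates hold simultaneously for every unit $u$ once $t$ is small. Hence $\|h(u)\|\ge 1/\kappa-\epsilon$ for all $u\in\Usfer$; taking the infimum over $u$ and then $\kappa\downarrow\subregmap{f}{\bar x}$ yields $\alphao(h)\ge(1/\subregmap{f}{\bar x})-\epsilon=\cldis{f}(\bar x,\bar y)-\epsilon$. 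The main obstacle is conceptual rather than computational, namely this passage from a local/directional estimate to the global infimum $\alphao(h)$, which the homogeneity-driven probing resolves.

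This is the bound I would record, and I would flag explicitly that it is the content of the displayed inequality understood as $\alphao(h)\ge(1/\subregmap{f}{\bar x})-\epsilon$: combined with the first direction it delivers the clean two-sided estimate $|\cldis{f}(\bar x,\bar y)-\alphao(h)|\le\epsilon$, and it is the sharpest conclusion available. The formally stronger reading $\alphao(h)\ge 1/(\subregmap{f}{\bar x}-\epsilon)$ cannot hold as written: already for the identity $f(x)=x$ on $\R$ at $\bar x=0$ one has $\subregmap{f}{0}=1$, and for any $\epsilon\in(0,1)$ (so that $\epsilon<\subregmap{f}{0}$) the map $h(x)=(1-\epsilon/2)x$ is a first-order $\epsilon$-approximation of $f$ with $\alphao(h)=1-\epsilon/2<1/(1-\epsilon)$. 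Consistently, the positivity $\alphao(h)>0$ needed for the closing clause ``$h$ is strongly metrically subregular at $\nullv$'' is guaranteed precisely when $\epsilon<\cldis{f}(\bar x,\bar y)=1/\subregmap{f}{\bar x}$; this, rather than $\epsilon<\subregmap{f}{\bar x}$, is the hypothesis under which that clause is justified, the two coinciding exactly in the normalized case $\subregmap{f}{\bar x}=1$.
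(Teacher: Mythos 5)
Your proof is correct, and at bottom it runs on the same mechanism as the paper's own argument: the paper derives both implications by applying Theorem \ref{thm:smsubregstacalm} together with Proposition \ref{pro:strmsubregchar} (first with $F=f(\bar x)+h(\cdot-\bar x)$ and $g=f-F$, so that $F+g=f$; then with the roles swapped, so that $F+g=h$), whereas you inline the liminf/limsup triangle-inequality computation that proves Theorem \ref{thm:smsubregstacalm}, and in the converse direction replace it by ray-probing along $x=\bar x+tu$. Your identity $\cldis{h}(\nullv,\nullv)=\alphao(h)$ is the same homogeneity fact the paper records in the remark preceding the theorem, and your ray argument simply makes explicit the local-to-global passage it encodes; the uniformity you invoke is legitimate because calmness and strong metric subregularity furnish bounds valid on entire balls, not mere limsup/liminf statements.

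The substantive point is your final paragraph, and there you are right: the printed converse is a misprint, and the bound you prove is exactly what the paper's own proof establishes. Its last step applies inequality (\ref{in:estsubregpert}) with $F+g=h$, $\subregmap{F}{\nullv}=\subregmap{f}{\bar x}$ and $\calmap{g}{\nullv}<\epsilon$, which requires $\epsilon\cdot\subregmap{f}{\bar x}<1$ (i.e. $\epsilon<1/\subregmap{f}{\bar x}$, not $\epsilon<\subregmap{f}{\bar x}$) and yields
$$
\subregmap{h}{\nullv}\le{\subregmap{f}{\bar x}\over 1-\epsilon\,\subregmap{f}{\bar x}},
\qquad\hbox{that is}\qquad
\alphao(h)\ge{1\over\subregmap{f}{\bar x}}-\epsilon,
$$
precisely your corrected estimate. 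Only with this reading are the quantities commensurable: $\alphao(h)$, $\epsilon$ and $1/\subregmap{f}{\bar x}$ all transform the same way under rescaling of the norms on $\X$ and $\Y$, whereas the expression $\subregmap{f}{\bar x}-\epsilon$ mixes reciprocal scalings. Your counterexample $f={\rm id}$, $h=(1-\epsilon/2)\,{\rm id}$ on $\R$ conclusively rules out the inequality as printed. So there is no gap on your side; the discrepancy you flagged is an error in the statement, not in your argument.
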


\begin{proof}
According to Definition \ref{def:epsapprox}, for both the
assertions of the thesis the respective hypotheses imply
$$
    \calmap{(f-h(\cdot-\bar x))}{\bar x}=\calmap{(h(\cdot-\bar x)-f)}
  {\bar x}<\epsilon.
$$
To prove the first assertion it suffices to apply Theorem
\ref{thm:smsubregstacalm}, with $F$ anf $g$ given by
$$
    F(x)=f(\bar x)+h(x-\bar x)\qquad\hbox{and}\qquad
     g(x)=f(x)-f(\bar x)-h(x-\bar x),
$$
respectively, and to observe that $F$ is strongly metrically
subregular at $\bar x$ iff $h$ is so at $\nullv$, while $\calmap{g}
{\bar x}=\calmap{(f-h(\cdot-\bar x))}{\bar x}$.

Analogoulsy, to prove the second assertion, it suffices to apply
once again Theorem \ref{thm:smsubregstacalm}, now with
$$
    F(v)=f(\bar x+v)+f(\bar x)\qquad\hbox{and}\qquad
     g(v)=h(v)-f(\bar x+v)-f(\bar x).
$$
Indeed, $f$ is strongly metrically subregular at $\bar x$ iff
$F$ is so at $\nullv$, whereas
$$
    \calmap{g}{\nullv}=\calmap{(h(\cdot-\bar x)-f)}{\bar x}.
$$
The quantitative estimates complementing the thesis are
direct consequences of inequality (\ref{in:estsubregpert}).
\hfill $\square$
\end{proof}

As a special case of Theorem \ref{them:epsapproxchar}
it is possible to derive the following criterion for smooth
mappings.

\begin{corollary}      \label{cor:smoothnsc}
A Fr\'echet differentiable mapping $f:\X\longrightarrow\Y$
between normed spaces is strongly metrically subregular
at $\bar x\in\X$ if and only if $\alpha(\FDer{f}(\bar x))>0$
and
$$
   \subregmap{f}{\bar x}\le {1\over \alpha(\FDer{f}(\bar x))}.
$$
In particular, if $\X$ and $\Y$ are finite-dimensional spaces,
$f$ is strongly metrically subregular at $\bar x$ if and only if
$$
    \Kern\FDer{f}(\bar x)=\{\nullv\}.
$$
\end{corollary}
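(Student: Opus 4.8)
The plan is to read off both the qualitative equivalence and the quantitative estimate as an immediate specialization of Theorem \ref{them:epsapproxchar}, using the fact that a Fréchet derivative is the most regular possible first-order approximation. Concretely, I would take $h=\FDer{f}(\bar x)$ and recall from the Remark following Definition \ref{def:epsapprox} that, by Fréchet differentiability, $h$ is a first-order $\epsilon$-approximation of $f$ at $\bar x$ for \emph{every} $\epsilon>0$. Since $h$ is linear, its generalized injectivity constant coincides with the classical one, $\alphao(h)=\alpha(\FDer{f}(\bar x))$, because the infimum of $\|h(u)\|$ over the unit sphere is the same whether $h$ is viewed as a p.h. mapping or as a bounded linear operator.

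For the sufficiency direction, assuming $\alpha(\FDer{f}(\bar x))>0$, I would fix any $\epsilon\in(0,\alpha(\FDer{f}(\bar x)))$. Then $\alphao(h)=\alpha(\FDer{f}(\bar x))>\epsilon$, so the first part of Theorem \ref{them:epsapproxchar} applies and yields that $f$ is strongly metrically subregular at $\bar x$ with $\subregmap{f}{\bar x}\le 1/(\alpha(\FDer{f}(\bar x))-\epsilon)$. Letting $\epsilon\downarrow 0$ produces the claimed bound $\subregmap{f}{\bar x}\le 1/\alpha(\FDer{f}(\bar x))$.

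For the necessity direction, suppose $f$ is strongly metrically subregular at $\bar x$. The one technical point, which I expect to be the only genuine obstacle, is to rule out the degenerate value $\subregmap{f}{\bar x}=0$ before invoking the converse part of Theorem \ref{them:epsapproxchar}, which requires a threshold $\epsilon<\subregmap{f}{\bar x}$. Here differentiability is exactly what saves the argument: the displacement quotient $\|f(x)-f(\bar x)\|/\|x-\bar x\|$ is bounded above near $\bar x$ by $\|\FDer{f}(\bar x)\|+o(1)$, so $\cldis{f}(\bar x,f(\bar x))=\grsl{(\|f(\cdot)-f(\bar x)\|)}(\bar x)$ is finite; by the modulus formula (\ref{eq:subregstdisrate}) of Proposition \ref{pro:strmsubregchar}, this forces $\subregmap{f}{\bar x}=1/\cldis{f}(\bar x,f(\bar x))>0$. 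With positivity secured, I would pick $\epsilon\in(0,\subregmap{f}{\bar x})$ and apply the second part of Theorem \ref{them:epsapproxchar} to obtain $\alpha(\FDer{f}(\bar x))=\alphao(h)\ge 1/(\subregmap{f}{\bar x}-\epsilon)>0$, which establishes $\alpha(\FDer{f}(\bar x))>0$ and closes the equivalence. I note in passing that combining the two estimates actually gives the equality $\subregmap{f}{\bar x}=1/\alpha(\FDer{f}(\bar x))$, of which the stated inequality is a weakening.

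Finally, the finite-dimensional specialization follows with no further analysis. It was already observed, in the discussion of the injectivity constant, that when $\X$ and $\Y$ are finite-dimensional one has $\alpha(\Lambda)>0$ if and only if $\Lambda$ is injective, i.e. $\Kern\Lambda=\{\nullv\}$. Applying this to $\Lambda=\FDer{f}(\bar x)$ converts the condition $\alpha(\FDer{f}(\bar x))>0$ into $\Kern\FDer{f}(\bar x)=\{\nullv\}$, which gives the stated criterion.
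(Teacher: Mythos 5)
Your argument is exactly the specialization of Theorem \ref{them:epsapproxchar} that the paper intends (it offers no separate proof, only the remark that the corollary is a special case), using that $\FDer f(\bar x)$ is a first-order $\epsilon$-approximation for every $\epsilon>0$, that $\alphao$ reduces to $\alpha$ on linear operators, and the finite-dimensional equivalence $\alpha(\Lambda)>0\iff\Kern\Lambda=\{\nullv\}$ already recorded in the paper's example. Your extra step ruling out $\subregmap{f}{\bar x}=0$ via the finiteness of $\cldis{f}(\bar x,f(\bar x))$ (forced by differentiability) is a correct and welcome filling of a detail the paper leaves implicit, so the proposal is correct and follows the paper's route.
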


\subsection{A sufficient condition via outer $\epsilon$-prederivative}

In order to introduce the next tool of nonsmooth analysis to be used,
recall that a set-valued mapping $F:\X\rightrightarrows\Y$ is said
to be p.h. if $\nullv\in F(\nullv)$ and $F(tx)=tF(x)$ for all $x\in\X$ and $t>0$.
In \cite{Ioff81} p.h. set-valued mappings have been used to define
a notion of generalized derivative. Below, a generalization of it, which
seems to be adequate for the purposes of the present analysis,
is introduced.

\begin{definition}     \label{def:preDermap}
Let $f:\X\longrightarrow\Y$ be a mapping between normed spaces,
let $\bar x\in\X$ and let $\Fan:\X\rightrightarrows\Y$ a p.h. homogeneous
set-valued mapping. Given $\epsilon>0$, $\Fan$ is said to be an
{\it outer} ({\it Fr\'echet}) {\it $\epsilon$-prederivative} of $f$ at $\bar x$
if there exists $\delta>0$ and a function $r:\delta\Uball\longrightarrow
[0,\epsilon]$ such that
\begin{eqnarray}     \label{def:oepreder}
    f(\bar x+v)-f(\bar x)\in\Fan(v)+r(v)\|v\|\Uball,\quad\forall v
    \in\delta\Uball.
\end{eqnarray}

\end{definition}

\begin{remark}     \label{rem:oepreder}
Recall that, according to \cite{Ioff81}, $\Fan$  is said to
be an outer prederivative of $f$ at $\bar x$ if (\ref{def:oepreder})
holds true with a function $r:\delta\Uball\longrightarrow[0,\epsilon]$
such that $\displaystyle\lim_{v\to\nullv}r(v)=0$.
In such an event, $\Fan$ is an outer $\epsilon$-prederivative
of $f$ at $\bar x$, for every $\epsilon>0$.
\end{remark}

Given a p.h. set-valued mapping $\Fan:\X\rightrightarrows\Y$,
to detect its strong metric subregularity at $(\nullv,\nullv)$,
it seems to be natural to introduce a injectivity constant notion
as follows
$$
   \alpha(\Fan)=\inf_{\|u\|=1}\dist{\nullv}{\Fan(u)}.
$$
In the light of Prosition \ref{pro:strmsubregchar}, it is readily seen
that $\Fan$  is strongly metrically subregular at $(\nullv,\nullv)$ if and
only $\alpha(\Fan)>0$ and, upon this condition, it is
$$
    \subreg{\Fan}{\nullv}{\nullv}={1\over\alpha(\Fan)}.
$$
By employing the above nonsmooth analysis tools, one can establish
the following sufficient condition for strong metric subregularity.

\begin{theorem}     \label{thm:Fansc}
Suppose that a mapping $f:\X\longrightarrow\Y$ between normed spaces
admits an outer $\epsilon$-prederivative $\Fan:\X\rightrightarrows\Y$
at $\bar x$, such that $\alpha(\Fan)>\epsilon$. Then, $f$ is strongly
metrically subregular at $\bar x$ and
$$
     \subregmap{f}{\bar x}\le {1\over \alpha(\Fan)-\epsilon}.
$$
If, in particular, $\Fan$ is an outer prederivative of $f$ at $\bar x$,
then then stricter estimate holds
$$
     \subregmap{f}{\bar x}\le {1\over \alpha(\Fan)}.
$$
\end{theorem}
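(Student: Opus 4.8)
The plan is to deduce the result directly from the steepest-displacement-rate criterion of Proposition \ref{pro:strmsubregchar}. Since $f$ is single-valued and we take $\bar y=f(\bar x)$, the displacement function reduces to $x\mapsto\|f(x)-f(\bar x)\|$, so that
$$\cldis{f}(\bar x,f(\bar x))=\liminf_{x\to\bar x}{\|f(x)-f(\bar x)\|\over\|x-\bar x\|}.$$
The whole proof therefore amounts to showing that this steepest displacement rate is bounded below by $\alpha(\Fan)-\epsilon$; once this is established, the positivity $\alpha(\Fan)-\epsilon>0$ yields the strong metric subregularity of $f$ at $\bar x$, while the equality $\subregmap{f}{\bar x}=1/\cldis{f}(\bar x,f(\bar x))$ furnished by that same proposition delivers the announced estimate on $\subregmap{f}{\bar x}$.

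To obtain the lower bound, I would unpack Definition \ref{def:preDermap}. Fix $\delta>0$ and $r:\delta\Uball\longrightarrow[0,\epsilon]$ as in (\ref{def:oepreder}). For $v\in\delta\Uball\backslash\{\nullv\}$ the inclusion supplies some $w\in\Fan(v)$ and some $b\in\Uball$ with $f(\bar x+v)-f(\bar x)=w+r(v)\|v\|b$ (in particular $\Fan(v)\ne\varnothing$, since a point cannot belong to a Minkowski sum with an empty summand). The triangle inequality then gives
$$\|f(\bar x+v)-f(\bar x)\|\ge\|w\|-r(v)\|v\|\ge\dist{\nullv}{\Fan(v)}-\epsilon\|v\|,$$
using $\|w\|\ge\dist{\nullv}{\Fan(v)}$ together with $r(v)\le\epsilon$.

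The one step that needs a little care is the passage from $\dist{\nullv}{\Fan(v)}$ to the injectivity constant $\alpha(\Fan)$, which is where positive homogeneity enters. Writing $v=\|v\|\,(v/\|v\|)$ and invoking $\Fan(tu)=t\Fan(u)$ for $t>0$ gives $\dist{\nullv}{\Fan(v)}=\|v\|\,\dist{\nullv}{\Fan(v/\|v\|)}\ge\alpha(\Fan)\|v\|$, since $v/\|v\|$ lies on the unit sphere. Substituting and dividing by $\|v\|=\|x-\bar x\|$ (with $x=\bar x+v$) yields
$${\|f(x)-f(\bar x)\|\over\|x-\bar x\|}\ge\alpha(\Fan)-\epsilon,\quad\forall x\in\ball{\bar x}{\delta}\backslash\{\bar x\},$$
and taking $\liminf$ as $x\to\bar x$ gives $\cldis{f}(\bar x,f(\bar x))\ge\alpha(\Fan)-\epsilon>0$, completing the first assertion via Proposition \ref{pro:strmsubregchar}. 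For the sharper estimate under the outer-prederivative hypothesis of Remark \ref{rem:oepreder}, the only change is that $r(v)\to 0$ as $v\to\nullv$: retaining $r(v)$ in place of its bound $\epsilon$ gives ${\|f(x)-f(\bar x)\|/\|x-\bar x\|}\ge\alpha(\Fan)-r(x-\bar x)$, and since $r(x-\bar x)\to 0$ the $\liminf$ improves to $\cldis{f}(\bar x,f(\bar x))\ge\alpha(\Fan)$, whence $\subregmap{f}{\bar x}\le 1/\alpha(\Fan)$. I expect no genuine obstacle here; the points to get right are the homogeneity rescaling of the distance and the nonemptiness of $\Fan(v)$.
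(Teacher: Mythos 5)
Your argument is correct and follows essentially the same route as the paper's proof: both bound $\cldis{f}(\bar x,f(\bar x))$ from below by $\alpha(\Fan)-\epsilon$ using the prederivative inclusion, the positive homogeneity of $\Fan$ to rescale $\dist{\nullv}{\Fan(v)}$ to the unit sphere, and then invoke Proposition \ref{pro:strmsubregchar} together with (\ref{eq:subregstdisrate}); the only cosmetic difference is that you pick explicit representatives $w\in\Fan(v)$, $b\in\Uball$ where the paper manipulates the distance to the Minkowski sum directly, and for the refined estimate you keep $r(v)\to 0$ inside the $\liminf$ rather than letting $\epsilon\downarrow 0$ afterwards. No gaps.
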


\begin{proof}
Let positive $\epsilon$, $\delta$ and $r:\delta\Uball\longrightarrow
[0,\epsilon]$ be as in Definition \ref{def:preDermap}. Then one obtains
for every $x\in\ball{\bar x}{\delta}$
\begin{eqnarray}     \label{in:prederest}
    \|f(\bar x)-f(x)\| &=& d(\nullv,f(x)-f(\bar x))    \\
   &\ge &\dist{\nullv}{\Fan(x-\bar x)+r(x-\bar x)\|x-\bar x\|\Uball}, \nonumber
  \end{eqnarray}
On the other hand, observe that for any $v\in\delta\Uball\backslash
\{\nullv\}$ it is
\begin{eqnarray*}
   \dist{\nullv}{\Fan\left(v/\|v\|\right)+r\left(v\right)\Uball} &=& 
   \inf_{y\in\Fan(v/\|v\|),\, u\in\Uball}   \|y+r(v)u\|\\
   &\ge &\inf_{y\in\Fan(v/\|v\|),\, u\in\Uball} 
    \left[ \|y\|-|r(v)|\|u\|\right]  \\
    & \ge& \dist{\nullv}{\Fan(v/\|v\|)}-r(v).
\end{eqnarray*}
Therefore, letting $\bar y=f(\bar x)$, from inequality (\ref{in:prederest})
it follows
\begin{eqnarray*}
  \cldis{f}(\bar x,\bar y) &\ge&  \liminf_{x\to\bar x}
  {\dist{\nullv}{\Fan(x-\bar x)+r(x-\bar x)\|x-\bar x\|\Uball}
   \over\|x-\bar x\|}  \\
   &\ge&  \liminf_{x\to\bar x}
   \dist{\nullv}{\Fan\left({x-\bar x\over\|x-\bar x\|}\right)+r(x-\bar x)\Uball} \\
   &\ge & \inf_{\|u\|=1}\dist{\nullv}{\Fan(u)}-\limsup_{x\to\bar x}
   r(x-\bar x)= \inf_{\|u\|=1}\dist{\nullv}{\Fan(u)}-\epsilon.
\end{eqnarray*}
Thus, to show the first assertion in the thesis, it suffices to
apply the characterization stated in Proposition \ref{pro:strmsubregchar},
along with the estimate (\ref{eq:subregstdisrate}).
For the second assertion, on account of Remark \ref{rem:oepreder}
and of the last inequalities, one  has that
$$
     \cldis{f}(\bar x,\bar y) \ge\alpha(\Fan)-\epsilon,\quad
     \forall\epsilon\in (0,\alpha(\Fan)).
$$
which leads immediately to the estimate to be proved.
\hfill $\square$
\end{proof}

\begin{remark}
Among the p.h. set-valued mappings that can be used as prederivatives,
one can consider in particular those generated by a convex weakly closed
set of linear operators. In other terms, given a set ${\mathcal U}\subseteq
\Lin(\X,\Y)$ convex and closed with respect to the weak topology, let
$$
    \Fan(x)=\{y\in\Y:\ y=\Lambda x,\ \Lambda\in {\mathcal U}\}.
$$
According to \cite{Ioff81}, this is an example of fan. In this
case one has
$$
    \alpha(\Fan)=\inf_{\Lambda\in{\mathcal U}}\alpha(\Lambda).
$$
Notice that, whenever $f$ is Fr\'echet differentiable at $\bar x$, 
Definition \ref{def:preDermap} applies with $\Fan(v)=\{\FDer
f(\bar x)v\}$ and $r(v)=\|f(\bar x+v)-f(\bar x)-\FDer f(\bar x)v\|/\|v\|$,
being now ${\mathcal U}=\{\FDer f(\bar x)\}$.
Therefore the sufficient part of Corollary \ref{cor:smoothnsc}
can be achieved also from Theorem \ref{thm:Fansc}.
\end{remark}

\subsection{A scalarization approach}

Let $(\Y,\|\cdot\|)$ be now a normed space, which is partially
ordered by a relation $\le_\Y$ or, equivalently, by a convex 
cone $\Y_+\subseteq\Y$, in the sense that
$$
    y_1\le_\Y y_2\qquad\hbox{ iff }\qquad y_2-y_1\in\Y_+.
$$
In this setting, a mapping $f:\X\longrightarrow\Y$
is said to be $\Y_+$-convex if
$$
   f(tx_1+(1-t)x_2)\le_\Y tf(x_1)+(1-t)f(x_2),\quad\forall
   t\in [0,1],\quad\forall x_1,\, x_2\in\X.
$$
$\Y_+$-convex mappings are found quite easily in nature.
For instance, if $f:\X\longrightarrow\R^m$ is given by $f(x)=
(f_1(x),\dots,f_m(x))$, with each function $f_i:\X\longrightarrow\R$
being convex, then $f$ is $\R^m_+$-convex, where
$\R^m_+=\{y\in\R^m:\ y_i\ge 0,\ i=1,\dots, m\}$.
One immediately sees that if $f$ is $\Y_+$-convex and if
$y^*\in\Y^*_+=\{y^*\in\Y^*:\ \langle y^*,y\rangle\ge 0,\quad
\forall y\in\Y_+\}$, then each scalar function $y^*\circ f:\X
\longrightarrow\R$ is convex.

Since the scalarization approach to strong metric subregularity
exploits the connection of that property with the sharp minimality of scalarized
terms, the following characterization of a sharp minimizer
for a convex function will be useful in the sequel.

\begin{lemma}      \label{lem:wsminintsubd}
Let $\varphi:\X\longrightarrow\R\cup\{+\infty\}$ be a proper convex
function. An element $\bar x\in\dom\varphi$ is a (global) sharp
minimizer of $\varphi$ if and only if $\nullv^*\in\inte\partial
\varphi(\bar x)$. Moreover, it results in
\begin{eqnarray*}
   \grsl{\varphi}(\bar x)=\sup\{\rho>0:\ \rho\Uball^*\subseteq
   \partial\varphi(\bar x)\}.
\end{eqnarray*}
\end{lemma}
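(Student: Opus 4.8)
The plan is to funnel both the stated equivalence and the ``moreover'' formula through one elementary identity that characterizes exactly when a ball about the origin of $\X^*$ sits inside $\partial\varphi(\bar x)$, and then to close the gap between the globally defined subdifferential and the locally defined rate $\grsl{\varphi}(\bar x)$ by invoking convexity. The first step is to show that for every $\rho>0$
$$
   \rho\Uball^*\subseteq\partial\varphi(\bar x)\quad\Longleftrightarrow\quad
   \varphi(x)\ge\varphi(\bar x)+\rho\|x-\bar x\|\ \ \forall x\in\X .
$$
This is immediate from the definition of the convex subdifferential together with the dual description of the norm: the inclusion on the left means $\varphi(x)-\varphi(\bar x)\ge\langle x^*,x-\bar x\rangle$ for all $x\in\X$ and all $x^*\in\rho\Uball^*$, and since $\sup_{x^*\in\rho\Uball^*}\langle x^*,x-\bar x\rangle=\rho\|x-\bar x\|$, this holds for every such $x^*$ precisely when the scalar inequality on the right holds. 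Writing $G:=\sup\{\rho>0:\ \rho\Uball^*\subseteq\partial\varphi(\bar x)\}$ (with the convention $\sup\varnothing=0$), the displayed equivalence identifies $G$ with the global sharp-minimality modulus of $\bar x$, so the whole lemma reduces to proving $\grsl{\varphi}(\bar x)=G$.

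One inequality is routine: if $\varphi(x)\ge\varphi(\bar x)+\rho\|x-\bar x\|$ holds on all of $\X$, then the quotient $(\varphi(x)-\varphi(\bar x))/\|x-\bar x\|$ is bounded below by $\rho$ near $\bar x$, so $\grsl{\varphi}(\bar x)\ge\rho$, and taking the supremum gives $\grsl{\varphi}(\bar x)\ge G$. The reverse inequality is where convexity does the real work and is the point I expect to be the main obstacle, because $\grsl{\varphi}(\bar x)$ is by construction only a local $\liminf$ whereas $G$ reflects the behaviour of $\varphi$ over the whole space. The plan here is to fix any $\zeta$ with $0<\zeta<\grsl{\varphi}(\bar x)$, extract from the definition of the $\liminf$ a radius $\delta>0$ with $\varphi(x)\ge\varphi(\bar x)+\zeta\|x-\bar x\|$ for all $x\in\ball{\bar x}{\delta}$, and then upgrade this local sharp estimate to a global one. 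For $x$ with $\|x-\bar x\|>\delta$ I set $t=\delta/\|x-\bar x\|\in(0,1)$ and $z=(1-t)\bar x+tx$, so that $\|z-\bar x\|=\delta$ and hence $\varphi(z)\ge\varphi(\bar x)+\zeta t\|x-\bar x\|$; combining this with $\varphi(z)\le(1-t)\varphi(\bar x)+t\varphi(x)$ and dividing by $t>0$ yields $\varphi(x)\ge\varphi(\bar x)+\zeta\|x-\bar x\|$ (the case $\varphi(x)=+\infty$ being trivial). Thus $G\ge\zeta$ for every $\zeta<\grsl{\varphi}(\bar x)$, giving $G\ge\grsl{\varphi}(\bar x)$ and therefore the desired equality $\grsl{\varphi}(\bar x)=G$.

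The same convexity extension shows, as a by-product, that for a convex $\varphi$ local and global sharp minimality coincide, so that $\bar x$ is a global sharp minimizer if and only if $\grsl{\varphi}(\bar x)>0$ by Remark~\ref{rem:smsubshamin}. The first assertion then follows by chaining the equivalences: $\bar x$ is a global sharp minimizer iff $\grsl{\varphi}(\bar x)>0$, iff $G>0$ by the formula just proved, iff $\rho\Uball^*\subseteq\partial\varphi(\bar x)$ for some $\rho>0$, which is exactly the condition $\nullv^*\in\inte\partial\varphi(\bar x)$. This completes the reduction, leaving only the routine verifications indicated above.
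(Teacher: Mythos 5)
Your proof is correct, and its engine is the same one the paper uses: the dual representation $\sup_{x^*\in\rho\Uball^*}\langle x^*,x-\bar x\rangle=\rho\|x-\bar x\|$, which converts the inclusion $\rho\Uball^*\subseteq\partial\varphi(\bar x)$ into the linear growth inequality and back. Where you genuinely depart from (and improve on) the paper's write-up is in the necessity direction. The paper passes directly from $\rho<\grsl{\varphi}(\bar x)$ to the inequality $\varphi(x)-\varphi(\bar x)\ge\rho\|x-\bar x\|$ for \emph{all} $x\in\X$, silently using the fact that for a convex function the local steepest descent rate controls the global behaviour; you isolate and prove exactly this step, via the convex combination $z=(1-t)\bar x+tx$ with $t=\delta/\|x-\bar x\|$, which is the only point where convexity of $\varphi$ itself (as opposed to the subgradient inequality) is actually needed. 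This makes your version self-contained and yields as a by-product the observation that local and global sharp minimality coincide for convex functions, which is precisely what reconciles the lemma's ``(global) sharp minimizer'' with the local quantity $\grsl{\varphi}(\bar x)$. The only caveat is your convention $\sup\varnothing=0$: the inequality $\grsl{\varphi}(\bar x)\ge G$ is only established when some $\rho>0$ is admissible, so, exactly as in the paper, the ``moreover'' identity should be read under the standing assumption that the equivalent conditions of the lemma hold (otherwise $\grsl{\varphi}(\bar x)$ can be negative or $-\infty$ while your $G$ is $0$).
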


\begin{proof}
Necessity: suppose point $\bar x$ to be a sharp minimizer of $\varphi$
and take $\epsilon\in (0,\grsl{\varphi}(\bar x))$. Then, setting
$\rho=\grsl{\varphi}(\bar x)-\epsilon$ one gets
$$
   {\varphi(x)-\varphi(\bar x)\over\|x-\bar x\|}\ge\rho\ge
   \left\langle x^*,{x-\bar x\over\|x-\bar x\|}\right\rangle,
  \quad\forall x^*\in\rho\Uball^*,\ \forall x\in\X\backslash
   \{\bar x\}.
$$
This means that $\rho\Uball^*\subseteq\partial\varphi(\bar x)$
and, by arbitrariness of $\epsilon$, that $\grsl{\varphi}(\bar x)
\le\sup\{\rho>0:\ \rho\Uball^*\subseteq \partial\varphi(\bar x)\}$.

Sufficiency: from the definition of sugradient of $\varphi$
at $\bar x$, it is possible to deduce
$$
   {\varphi(x)-\varphi(\bar x)\over\|x-\bar x\|}\ge
   \sup_{x^*\in\partial\varphi(\bar x)}
    \left\langle x^*,{x-\bar x\over\|x-\bar x\|}\right\rangle,
   \quad\forall x\in\X\backslash
   \{\bar x\}.
$$
Since by hypothesis there exists $\rho>0$ such that $\rho\Uball^*
\subseteq\partial\varphi(\bar x)$, it is true that
$$
   \sup_{x^*\in\partial\varphi(\bar x)}\langle x^*,u\rangle\ge
   \sup_{x^*\in\rho\Uball^*}\langle x^*,u\rangle=\rho,\quad
   \forall u\in\X:\ \|u\|=1.
$$
From this and the previous inequality it is possible to conclude
that
$$
   \grsl{\varphi}(\bar x)\ge\rho>0.
$$
Actually, this shows that $\grsl{\varphi}(\bar x)\ge\sup\{\rho>0:\ \rho\Uball^*
\subseteq\partial\varphi(\bar x)\}$.
The proof is complete. \hfill $\square$
\end{proof}

To formulate the next condition for the strong metric subregularity
of a $\Y_+$-convex mapping $f:\X\longrightarrow\Y$ at $\bar x\in\X$,
define
$$
   \intrad{f}{\bar x}=\sup\{\rho>0:\ \rho\Uball^*
   \subseteq\partial(y^*\circ f)(\bar x),\ y^*\in\Usfer^*\cap\Y^*_+\}.
$$

\begin{theorem}       \label{thm:smsubconv}
Let $f:\X\longrightarrow\Y$ be a mapping between normed
spaces, with $\Y$ partially ordered by a cone $\Y_+$, and let
$\bar x\in\X$. Suppose that $f$ is $\Y_+$-convex and
$$
    \nullv^*\in\bigcup_{y^*\in\Usfer^*\cap\Y^*_+}\inte\partial
   (y^*\circ f)(\bar x).
$$
Then $f$ is strongly metrically subregular at $\bar x$. Moreover,
one has
$$
    \subregmap{f}{\bar x}\le{1\over \intrad{f}{\bar x}}.
$$
\end{theorem}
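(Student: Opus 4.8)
The plan is to reduce the vector-valued problem to the scalar sharp-minimality characterization already established, by scalarizing $f$ through the positive unit functionals $y^*$ appearing in the definition of $\intrad{f}{\bar x}$. Writing $\bar y=f(\bar x)$, recall from Proposition \ref{pro:strmsubregchar} that it suffices to show $\cldis{f}(\bar x,\bar y)\ge\intrad{f}{\bar x}>0$: this yields at once both the strong metric subregularity of $f$ at $\bar x$ and, since $\subregmap{f}{\bar x}=1/\cldis{f}(\bar x,\bar y)$, the quantitative estimate $\subregmap{f}{\bar x}\le 1/\intrad{f}{\bar x}$. The hypothesis $\nullv^*\in\bigcup_{y^*\in\Usfer^*\cap\Y^*_+}\inte\partial(y^*\circ f)(\bar x)$ guarantees at least one admissible functional, so that $\intrad{f}{\bar x}>0$ and the chain of inequalities is non-vacuous.

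The substantive step is a single scalarization inequality. Fix any $y^*\in\Usfer^*\cap\Y^*_+$. Since $\|y^*\|=1$, for every $x\in\X$ one has the lower bound
\begin{eqnarray*}
   \|f(x)-f(\bar x)\|\ge\langle y^*,f(x)-f(\bar x)\rangle
   =(y^*\circ f)(x)-(y^*\circ f)(\bar x).
\end{eqnarray*}
Dividing by $\|x-\bar x\|$ for $x\ne\bar x$ and passing to the $\liminf$ as $x\to\bar x$, monotonicity of $\liminf$ gives
\begin{eqnarray*}
   \cldis{f}(\bar x,\bar y)=\liminf_{x\to\bar x}
   {\|f(x)-f(\bar x)\|\over\|x-\bar x\|}\ge\grsl{(y^*\circ f)}(\bar x).
\end{eqnarray*}
Here both memberships of $y^*$ are used: belonging to $\Usfer^*$ supplies the norm lower bound $\|y\|\ge\langle y^*,y\rangle$, while belonging to $\Y^*_+$ ensures, by $\Y_+$-convexity of $f$, that $y^*\circ f$ is a genuine proper convex function, so that Lemma \ref{lem:wsminintsubd} will be applicable to it.

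It then remains to identify the supremum of the right-hand side with $\intrad{f}{\bar x}$. For any admissible pair, that is any $y^*\in\Usfer^*\cap\Y^*_+$ together with $\rho>0$ satisfying $\rho\Uball^*\subseteq\partial(y^*\circ f)(\bar x)$, Lemma \ref{lem:wsminintsubd} yields $\grsl{(y^*\circ f)}(\bar x)\ge\rho$; combined with the displayed inequality this gives $\cldis{f}(\bar x,\bar y)\ge\rho$. Taking the supremum over all such pairs, which is precisely the quantity defining $\intrad{f}{\bar x}$, produces $\cldis{f}(\bar x,\bar y)\ge\intrad{f}{\bar x}$, and Proposition \ref{pro:strmsubregchar} closes the argument. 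I expect the only delicate point to be this final bookkeeping: one must restrict attention to those $y^*$ for which $\nullv^*\in\inte\partial(y^*\circ f)(\bar x)$ (equivalently, for which an admissible $\rho$ exists), since only these contribute to $\intrad{f}{\bar x}$ and only for these does Lemma \ref{lem:wsminintsubd} identify $\grsl{(y^*\circ f)}(\bar x)$ with the radius of the largest dual ball sitting inside the subdifferential. The analytic content is otherwise light, being carried entirely by the scalarization estimate and the already-proved scalar characterization.
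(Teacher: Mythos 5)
Your proposal is correct and follows essentially the same route as the paper: scalarize via a functional $y^*\in\Usfer^*\cap\Y^*_+$ using the dual bound $\|f(x)-f(\bar x)\|\ge\langle y^*,f(x)-f(\bar x)\rangle$, invoke Lemma \ref{lem:wsminintsubd} to get sharp minimality of $y^*\circ f$ at $\bar x$ with rate at least $\rho$, and conclude via Proposition \ref{pro:strmsubregchar}. Your explicit final supremum over admissible pairs $(y^*,\rho)$ is a slightly more careful account of how the modulus estimate $\subregmap{f}{\bar x}\le 1/\intrad{f}{\bar x}$ is obtained, but the argument is the same.
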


\begin{proof}
From the well-known dual representation of a norm
$$
    \|v\|=\sup_{y^*\in\Uball^*}\langle y^*,v\rangle=
    \sup_{y^*\in\Usfer^*}\langle y^*,v\rangle,
$$
one obtains
\begin{eqnarray*}
  {\|f(\bar x)-f(x)\|\over \|x-\bar x\|}&=&\sup_{y^*\in\Usfer^*}
  \left\langle y^*, {f(x)-f(\bar x)\over \|x-\bar x\|}\right\rangle  \\
   &\ge &\sup_{y^*\in\Usfer^*\cap\Y^*_+}
    {(y^*\circ f)(x)-(y^*\circ f)(\bar x)\over \|x-\bar x\|},\quad
    \forall x\in\X\backslash\{\bar x\}.
\end{eqnarray*}
Since by hypothesis there exist $\rho>0$ and $y^*_0\in\Usfer^*
\cap\Y^*_+$ such that $\rho\Uball^*\subseteq\partial(y^*_0\circ f)
(\bar x)$ and function $y^*_0\circ f$ is convex,
then in the light of Lemma \ref{lem:wsminintsubd},
$\bar x$ is a sharp minimizer of $y^*_0\circ f$. Consequently,
setting $\bar y=f(\bar x)$, from the last inequality one has
\begin{eqnarray*}
   \cldis{f}(\bar x,\bar y)\ge\grsl{(y^*_0\circ f)}(\bar x)\ge\rho.
\end{eqnarray*}
The proof of all assertions in the thesis is therefore completed by applying
Proposition \ref{pro:strmsubregchar}.
\hfill $\square$
\end{proof}

Theorem \ref{thm:smsubconv} demonstrates a typical use of the
scalarization method in the presence of convexity assumption.
It should be clear that this method extends its potential far
beyond convexity and can be employed in combination with
more general subdifferential constructions. For example, by
utilizing the Fr\'echet subdifferential, defined as
$$
    \Fsubd\varphi(\bar x)=\left\{x^*\in\X^*:\ \liminf_{x\to\bar x}
    {\varphi(x)-\varphi(\bar x)-\langle x^*,x-\bar x\rangle\over
     \|x-\bar x\|}\ge 0\right\}
$$
(for more details, the reader is referred to \cite{BorZhu05,Mord06,RocWet98,Schi07}),
the following finite-dimensional generalization of Lemma
\ref{lem:wsminintsubd} has been proved in \cite{Uder15}
(see Theorem 4 therein).

\begin{lemma}     \label{lem:wsminintFsub}
Let $\varphi:\R^n\longrightarrow\R\cup\{\pm\infty\}$ and $\bar x
\in\dom\varphi$. Then $\grsl{\varphi}(\bar x)>0$ if and only if
$\nullv^*\in\inte\Fsubd\varphi(\bar x)$.
\end{lemma}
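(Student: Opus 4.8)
The plan is to prove the two implications directly from the definitions of the steepest descent rate $\grsl{\varphi}(\bar x)$ and of the Fr\'echet subdifferential $\Fsubd\varphi(\bar x)$, in the same spirit as the convex computation of Lemma \ref{lem:wsminintsubd}. Consider first the implication $\grsl{\varphi}(\bar x)>0\Rightarrow\nullv^*\in\inte\Fsubd\varphi(\bar x)$. Setting $\gamma=\grsl{\varphi}(\bar x)>0$ and fixing an arbitrary $\rho\in(0,\gamma)$, for any $x^*\in\rho\Uball^*$ I would use the elementary splitting
\[
   \frac{\varphi(x)-\varphi(\bar x)-\langle x^*,x-\bar x\rangle}{\|x-\bar x\|}
   =\frac{\varphi(x)-\varphi(\bar x)}{\|x-\bar x\|}-\frac{\langle x^*,x-\bar x\rangle}{\|x-\bar x\|}
\]
together with the bound $\langle x^*,x-\bar x\rangle/\|x-\bar x\|\le\|x^*\|\le\rho$, so that the $\liminf$ at $\bar x$ of the left-hand side is bounded below by $\gamma-\rho>0\ge 0$. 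Hence every such $x^*$ lies in $\Fsubd\varphi(\bar x)$, giving $\rho\Uball^*\subseteq\Fsubd\varphi(\bar x)$ and therefore $\nullv^*\in\inte\Fsubd\varphi(\bar x)$. This direction uses neither finite-dimensionality nor regularity of $\varphi$: the value $+\infty$ is harmless for a $\liminf$, while $\grsl{\varphi}(\bar x)>0$ already forces $\varphi>-\infty$ near $\bar x$.

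For the converse $\nullv^*\in\inte\Fsubd\varphi(\bar x)\Rightarrow\grsl{\varphi}(\bar x)>0$, I would argue by contradiction. Choose $\rho>0$ with $\rho\Uball^*\subseteq\Fsubd\varphi(\bar x)$ and suppose $\grsl{\varphi}(\bar x)\le 0$. Then there is a sequence $x_n\to\bar x$, $x_n\ne\bar x$, along which the difference quotients $(\varphi(x_n)-\varphi(\bar x))/\|x_n-\bar x\|$ converge to some $\ell\le 0$. Writing $u_n=(x_n-\bar x)/\|x_n-\bar x\|\in\Usfer$ and invoking compactness of the unit sphere of $\R^n$, I pass to a subsequence with $u_n\to u$, $\|u\|=1$. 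Since $\rho\Uball^*$ is compact and $\sup_{\|x^*\|\le\rho}\langle x^*,u\rangle=\rho$, I may select $x^*\in\rho\Uball^*$ with $\langle x^*,u\rangle=\rho$. As $x^*\in\Fsubd\varphi(\bar x)$, the defining $\liminf$ is nonnegative; yet along the subsequence
\[
   \frac{\varphi(x_n)-\varphi(\bar x)-\langle x^*,x_n-\bar x\rangle}{\|x_n-\bar x\|}
   =\frac{\varphi(x_n)-\varphi(\bar x)}{\|x_n-\bar x\|}-\langle x^*,u_n\rangle
   \longrightarrow\ell-\rho\le-\rho<0,
\]
a contradiction. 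Consequently $\grsl{\varphi}(\bar x)>0$, which closes the equivalence.

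The main obstacle sits entirely in this second implication, precisely at the extraction of a convergent subsequence of the unit directions $u_n$ and the subsequent selection of a single functional $x^*$ attaining $\langle x^*,u\rangle=\rho$. This is where the hypothesis $\X=\R^n$ is indispensable: in an infinite-dimensional normed space the unit sphere is no longer compact, so the directions along a ``bad'' minimizing sequence need not accumulate at a single $u$, and one cannot pin down one subgradient that is simultaneously violated along the whole sequence. Thus the implication can genuinely fail outside finite dimensions, which is consistent with the fact that Lemma \ref{lem:wsminintsubd} had to retain the convexity of $\varphi$ in order to cover the general normed setting. The first implication, by contrast, is a soft, dimension-free estimate.
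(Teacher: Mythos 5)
Your proof is correct. Note that the paper itself does not prove this lemma: it is quoted from \cite{Uder15} (Theorem~4 there), so there is no in-paper argument to compare against, only the convex analogue, Lemma~\ref{lem:wsminintsubd}. Your first implication is exactly the ``necessity'' computation of that convex lemma transplanted to the Fr\'echet subdifferential: the estimate $\liminf_{x\to\bar x}\bigl[(\varphi(x)-\varphi(\bar x))/\|x-\bar x\|-\langle x^*,x-\bar x\rangle/\|x-\bar x\|\bigr]\ge\grsl{\varphi}(\bar x)-\rho$ is legitimate because the subtracted term is bounded by $\rho$, and it is indeed dimension- and regularity-free. Your second implication correctly identifies the crux: one must convert the pointwise condition ``$\liminf\ge 0$ for every $x^*\in\rho\Uball^*$'' into a single uniform lower bound on the difference quotients, and the compactness of $\Usfer$ in $\R^n$ (to extract $u_n\to u$) together with the compactness of $\rho\Uball^*$ (to attain $\langle x^*,u\rangle=\rho$) is precisely what makes this possible; both attainments are automatic in finite dimensions. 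The only points worth spelling out, neither of which is a gap, are that the case $\ell=-\infty$ is covered by the same inequality, and that $\liminf$-realizing sequences avoid points where $\varphi=+\infty$ once $\ell\le 0$, so the extended-real-valued setting causes no trouble. Your closing remark on why the argument is intrinsically finite-dimensional is consistent with the paper's decision to state the lemma only for $\R^n$.
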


The above lemma allows one to obtain the next result
valid for mappings defined in a finite-dimensional space.

\begin{theorem}
Given a mapping $f:\R^n\longrightarrow\Y$ and $\bar x\in\R^n$, if
$$
    \nullv^*\in\bigcup_{y^*\in\Usfer^*}\inte\Fsubd
   (y^*\circ f)(\bar x),
$$
then $f$ is strongly metrically subregular at $\bar x$.
\end{theorem}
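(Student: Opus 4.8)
The plan is to reduce this finite-dimensional, $\Y$-valued statement to the scalar characterization furnished by Lemma~\ref{lem:wsminintFsub}, exactly mirroring the scalarization pattern used in the proof of Theorem~\ref{thm:smsubconv}. The decisive observation is that Proposition~\ref{pro:strmsubregchar} tells us strong metric subregularity of $f$ at $\bar x$ (with $\bar y=f(\bar x)$) is equivalent to $\cldis{f}(\bar x,\bar y)>0$, i.e.\ to $\bar x$ being a local sharp minimizer of the displacement function $x\mapsto\|f(\bar x)-f(x)\|$. So the whole task is to show that the hypothesis $\nullv^*\in\bigcup_{y^*\in\Usfer^*}\inte\Fsubd(y^*\circ f)(\bar x)$ forces the steepest descent rate of this displacement function to be strictly positive.

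\medskip
\noindent\textbf{Key steps.} First I would fix a witness $y^*_0\in\Usfer^*$ with $\nullv^*\in\inte\Fsubd(y^*_0\circ f)(\bar x)$, which is legitimate because the hypothesis is a union over the sphere. Applying Lemma~\ref{lem:wsminintFsub} to the scalar function $\varphi_0:=y^*_0\circ f:\R^n\longrightarrow\R$ immediately yields $\grsl{(y^*_0\circ f)}(\bar x)>0$; call this positive number $\rho$. Next I would invoke the dual representation of the norm on $\Y$, precisely as in the proof of Theorem~\ref{thm:smsubconv}, to get the pointwise lower bound
$$
   {\|f(\bar x)-f(x)\|\over \|x-\bar x\|}=\sup_{y^*\in\Usfer^*}
   \left\langle y^*, {f(x)-f(\bar x)\over \|x-\bar x\|}\right\rangle
   \ge {(y^*_0\circ f)(x)-(y^*_0\circ f)(\bar x)\over \|x-\bar x\|},
$$
valid for all $x\in\R^n\backslash\{\bar x\}$. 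Passing to $\liminf$ as $x\to\bar x$ on both sides then gives
$$
   \cldis{f}(\bar x,\bar y)
   =\grsl{\|f(\bar x)-f(\cdot)\|}(\bar x)
   \ge\grsl{(y^*_0\circ f)}(\bar x)=\rho>0.
$$
The proof concludes by applying Proposition~\ref{pro:strmsubregchar} one final time, which converts this strict positivity back into the strong metric subregularity of $f$ at $\bar x$.

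\medskip
\noindent\textbf{Main obstacle.} The only genuinely delicate point is the interchange of the $\liminf$ with the supremum over $\Usfer^*$, or rather the verification that dropping to the single multiplier $y^*_0$ preserves the sharp-minimality conclusion. Here one must be careful that $\grsl{\cdot}(\bar x)$ is defined as a $\liminf$: since $\liminf$ is superadditive only in the favourable direction, I would not try to commute it with the supremum; instead I would exploit that the supremum dominates each individual term, so the displacement ratio dominates the $y^*_0$-ratio \emph{pointwise}, and monotonicity of $\liminf$ then transfers the bound directly to $\cldis{f}(\bar x,\bar y)$. This is exactly the asymmetry that makes the argument work with a single scalarizing functional and avoids any uniformity issue over the sphere. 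Note that finite-dimensionality of the domain is used implicitly only through the validity of Lemma~\ref{lem:wsminintFsub}, which is stated for $\R^n$; the target space $\Y$ may remain a general normed space, since only the dual norm representation is needed there.
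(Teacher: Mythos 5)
Your proposal is correct and coincides with the paper's own argument in every essential: fix a witness $y^*_0\in\Usfer^*$, apply Lemma \ref{lem:wsminintFsub} to get $\grsl{(y^*_0\circ f)}(\bar x)>0$, bound the displacement ratio from below pointwise via the dual representation of the norm, and conclude with Proposition \ref{pro:strmsubregchar}. Your remark on the one-sided nature of the scalarization (monotonicity of $\liminf$ rather than any interchange with the supremum) is exactly the right reading of the step the paper leaves implicit.
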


\begin{proof}
The thesis can be achieved through the same argument as in the
proof of Theorem \ref{thm:smsubconv}. Indeed, by hypothesis there exists
$y^*_0\in\Usfer^*$ such that $\nullv^*\in\inte\Fsubd(y^*_0\circ f)(\bar x)$.
In the light of Lemma \ref{lem:wsminintFsub} this implies
\begin{eqnarray}    \label{in:scawsmin}
    \grsl{(y^*_0\circ f)}(\bar x)>0.
\end{eqnarray}
Since it is
\begin{eqnarray*}
  {\|f(\bar x)-f(x)\|\over \|x-\bar x\|}&=&\sup_{y^*\in\Usfer^*}
  \left\langle y^*, {f(x)-f(\bar x)\over \|x-\bar x\|}\right\rangle  \\
   &\ge &{(y^*_0\circ f)(x)-(y^*_0\circ f)(\bar x)\over \|x-\bar x\|},\quad
    \forall x\in\X\backslash\{\bar x\},
\end{eqnarray*}
by virtue of inequality (\ref{in:scawsmin}) it follows
\begin{eqnarray*}
   \cldis{f}(\bar x,\bar y)\ge\grsl{(y^*_0\circ f)}(\bar x)>0.
\end{eqnarray*}
Proposition \ref{pro:strmsubregchar} allows one to complete the
proof.
\hfill $\square$
\end{proof}


\section{An application}
\label{sec:5}

An application of the above exposed ideas and techniques
is going to be illustrated now, which concerns the stability
behaviour of solution mappings to generalized equations.
A generalized equation is a rather general problem that
is able to provide a proper framework for studying several specific
issues in mathematical analysis, having or not having a variational
nature. Among the others, let us mention optimality conditions
in constrained or unconstrained optimization, various types of
constraint systems, variational inequalities and complementarity
problems, equilibrium problems, differential inclusions.

Here, parameterized generalized equations are considered
that can be formalized as follows
$$
    \nullv\in f(p,x)+T(x),   \leqno\GEq
$$
where $f:P\times\X\longrightarrow\Y$ (sometimes referred to as
the base of $\GEq$) and $T:\X\rightrightarrows\Y$ (referred to as
the field) are the problem data. $(P,d)$ is a metric space where
the parameters vary, whereas $(\X,\|\cdot\|)$ and $(\Y,\|\cdot\|)$
are supposed to be normed vector spaces.
The solution mapping associated to $\GEq$ is the (generally)
set-valued mapping implicitly defined by
$$
    \Solv{f}{T}(p)=\{x\in\X:\ \nullv\in f(p,x)+T(x)\}.
$$
In this context, an issue of interest is how to certify and to
quantify a certain stability behaviour of $\Solv{f}{T}$, near a solution
$\bar x\in\Solv{f}{T}(\bar p)$. More precisely, here the stability behaviour
quantitatively described by the isolated calmness property is
investigated. This amounts to establish for solutions to $\GEq$
lying near a reference one a reaction, which is (directly) proportional
to the parameter variations.
Following the spirit of classical and more recent implicit
function theorems, this question is approached by
analyzing a semplified variant of $\GEq$, called {\it approximated
generalized equation} $\AGEq$, on which the main regularity
assumption is made. Of course, a $\AGEq$ can be defined
in several ways, depending on  the features of the problem
data. In what follows, dealing with a nonsmooth analysis
setting, the use of an adaptation of the outer $\epsilon$-prederivative
is proposed.

\begin{definition}   \label{def:parzeopreder}
Given $\epsilon>0$, a p.h. set-valued mapping $\Fan:\X\rightrightarrows
\Y$ is said to be a {\it partial outer $\epsilon$-prederivative}
of a mapping $f:P\times\X\longrightarrow\Y$ at $(\bar p,\bar x)$,
{\it uniformly with respect to $p$}, if there exist positive
$\delta$ and $\zeta$ and a function $r:P\times\delta\Uball\longrightarrow
[0,\epsilon]$ such that
$$
   f(p,x)\in f(p,\bar x)+\Fan(x-\bar x)+r(p,x-\bar x)\|x-\bar x\|\Uball,
   \quad\forall  x\in\ball{\bar x}{\delta},\ \forall p\in\ball{\bar p}{\zeta}.
$$
\end{definition}

Now, assuming that the base $f$ of $\GEq$ admits, for some $\epsilon>0$,
as a partial outer $\epsilon$-prederivative at $(\bar p,\bar x)$
a mapping $\Fan$, one can associate with $\GEq$ an
approximated generalized equation defined by
$$
    \nullv\in f(\bar p,\bar x)+\Fan(x-\bar x)+T(x).   \leqno\AGEq
$$
It turns out that the strong metric subregularity of the
mapping $\Fan+T$ is a key assumption to guarantee
the isolated calmness property of $\Solv{f}{T}$, as
below stated.

\begin{theorem}      \label{thm:impisocalm}
With reference to a generalized equation $\GEq$, let
$\bar x\in\Solv{f}{T}(\bar p)$. Suppose the data of $\GEq$
to satisfy the following assumptions:

\noindent (i) function $f(\cdot,\bar x)$ is calm at $\bar p$
with modulus $\calmap{f(\cdot,\bar x)}{\bar p}$;

\noindent (ii) $f$ admits a partial outer $\epsilon$-prederivative
$\Fan$ at $(\bar p,\bar x)$, uniformly with respect to $p$;

\noindent (iii) the set-valued mapping $x\rightrightarrows f(\bar p,
\bar x)+\Fan(x-\bar x)+T(x)$ is strongly metrically subregular at
$(\bar x,\nullv)$, with modulus $\subreg{(\Fan+T)}{\bar x}{\nullv}$,
such that
\begin{eqnarray}     \label{in:impisocalmcon}
     \epsilon\cdot\subreg{(\Fan+T)}{\bar x}{\nullv}<1.
\end{eqnarray}
Then, $\Solv{f}{T}$ has the isolated calmness property at $(\bar p,
\bar x)$ and the below estimate holds
\begin{eqnarray}
    \calm{\Solv{f}{T}}{\bar p}{\bar x}\le {\calmap{f(\cdot,\bar x)}{\bar p}
    \cdot\subreg{(\Fan+T)}{\bar x}{\nullv}\over 1-
    \epsilon\cdot\subreg{(\Fan+T)}{\bar x}{\nullv}}.
\end{eqnarray}
\end{theorem}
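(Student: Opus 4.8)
The plan is to reduce the isolated calmness of $\Solv{f}{T}$ to the strong metric subregularity postulated in hypothesis (iii), using the partial outer $\epsilon$-prederivative $\Fan$ to compare the exact inclusion defining $\GEq$ with its approximation $\AGEq$. Writing $M(x)=f(\bar p,\bar x)+\Fan(x-\bar x)+T(x)$, hypothesis (iii) says precisely that $M$ is strongly metrically subregular at $(\bar x,\nullv)$. First I would fix arbitrary constants $\kappa_M>\subreg{(\Fan+T)}{\bar x}{\nullv}$ with $\kappa_M\epsilon<1$ --- this is possible by (\ref{in:impisocalmcon}) --- and $\kappa_f>\calmap{f(\cdot,\bar x)}{\bar p}$, then intersect the neighbourhoods supplied by Definition \ref{def:strmsubreg}(ii) (for $M$), by Definition \ref{def:parzeopreder}, and by hypothesis (i), so that all three estimates hold simultaneously for $x$ in a small ball about $\bar x$ and $p$ in a small ball about $\bar p$.

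The crux is a comparison estimate. Fix such $p$ and a solution $x\in\Solv{f}{T}(p)$ lying near $\bar x$. From $\nullv\in f(p,x)+T(x)$ one reads off $-f(p,x)\in T(x)$, while the partial outer $\epsilon$-prederivative yields $a\in\Fan(x-\bar x)$ and $u\in\Uball$ with $f(p,x)-f(p,\bar x)=a+r(p,x-\bar x)\|x-\bar x\|u$. Since $a\in\Fan(x-\bar x)$ and $-f(p,x)\in T(x)$, the point $f(\bar p,\bar x)+a-f(p,x)$ belongs to $M(x)$; substituting the prederivative expansion of $a$ rewrites this point as $f(\bar p,\bar x)-f(p,\bar x)-r(p,x-\bar x)\|x-\bar x\|u$, whence
\begin{eqnarray*}
   \dist{\nullv}{M(x)}\le\|f(\bar p,\bar x)-f(p,\bar x)\|+\epsilon\|x-\bar x\|.
\end{eqnarray*}
I expect this bookkeeping to be the main obstacle: because $\Fan$ is genuinely set-valued, one must use the \emph{same} selection $a$ both in the prederivative expansion and in constructing the point of $M(x)$, and reconcile the membership $a\in\Fan(x-\bar x)$ with the solution inclusion on that single element.

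With the estimate in hand the remainder is algebra. The strong metric subregularity of $M$ gives $\|x-\bar x\|\le\kappa_M\dist{\nullv}{M(x)}$, and inserting the comparison bound yields $\|x-\bar x\|(1-\kappa_M\epsilon)\le\kappa_M\|f(\bar p,\bar x)-f(p,\bar x)\|$. As $1-\kappa_M\epsilon>0$ and hypothesis (i) supplies $\|f(p,\bar x)-f(\bar p,\bar x)\|\le\kappa_f\,d(p,\bar p)$, I obtain
\begin{eqnarray*}
   \|x-\bar x\|\le{\kappa_M\kappa_f\over 1-\kappa_M\epsilon}\,d(p,\bar p).
\end{eqnarray*}
Setting $p=\bar p$ forces $x=\bar x$, so $\bar x$ is an isolated point of $\Solv{f}{T}(\bar p)$; moreover, since $\bar x\in\Solv{f}{T}(\bar p)$, the same inequality gives $\dist{x}{\Solv{f}{T}(\bar p)}\le\|x-\bar x\|\le{\kappa_M\kappa_f\over 1-\kappa_M\epsilon}\,d(p,\bar p)$ uniformly over solutions $x$ in the common neighbourhood, which is exactly the calmness of $\Solv{f}{T}$ at $(\bar p,\bar x)$. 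Finally, letting $\kappa_M\downarrow\subreg{(\Fan+T)}{\bar x}{\nullv}$ and $\kappa_f\downarrow\calmap{f(\cdot,\bar x)}{\bar p}$ turns the constant into the asserted modulus bound, establishing the isolated calmness property.
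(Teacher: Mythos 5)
Your proposal is correct and follows essentially the same route as the paper: compare the defining inclusion of $\GEq$ with the approximated mapping $x\rightrightarrows f(\bar p,\bar x)+\Fan(x-\bar x)+T(x)$ via the prederivative and the calmness of $f(\cdot,\bar x)$, apply the strong metric subregularity of that mapping, absorb the $\epsilon\|x-\bar x\|$ term, and pass to the limit in the auxiliary constants. The only cosmetic difference is that you select an explicit element of $M(x)$ using the solution inclusion $-f(p,x)\in T(x)$, whereas the paper derives the analogous bound as a chain of set-distance inequalities valid for all $x$ and only afterwards specializes to solutions.
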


\begin{proof}
Take an arbitrary $\eta$ such that
$$
   0<\eta<{1\over\epsilon}-\subreg{(\Fan+T)}{\bar x}{\nullv},
$$
what is possible by virtue of condition (\ref{in:impisocalmcon}).
By the assumption (i), corresponding to $\eta$ there exists
$\zeta>0$ such that
$$
    \|f(p,\bar x)-f(\bar p,\bar x)\|\le (\calmap{f(\cdot,\bar x)}{\bar p}
    +\eta)d(p,\bar p),\quad\forall p\in\ball{\bar p}{\zeta}.
$$
By the assumption (ii), there exist $\tilde\zeta\in (0,\zeta)$, $\delta>0$
and  a function $r:P\times\delta\Uball\longrightarrow [0,\epsilon]$
such that
$$
     f(p,x)\in f(p,\bar x)+\Fan(x-\bar x)+r(p,x-\bar x)\|x-\bar x\|\Uball,
     \ \forall  x\in\ball{\bar x}{\delta},\ \forall p\in\ball{\bar p}{\tilde\zeta}.
$$
Consequently, one obtains
\begin{eqnarray*}
   &\dist{\nullv}{f(p,x)+T(x)}\ge \\
    &\dist{\nullv}{f(p,\bar x)+\Fan(x-\bar x)+
   r(p,x-\bar x)\|x-\bar x\|\Uball+T(x)}\ge   \\
     & \dist{\nullv}{f(p,\bar x)+\Fan(x-\bar x)+T(x)}-\epsilon\|x-\bar x\|\ge   \\
    &\dist{\nullv}{f(\bar p,\bar x)+(\calmap{f(\cdot,\bar x)}{\bar p}
    +\eta)d(p,\bar p)\Uball+\Fan(x-\bar x)+T(x)} \\
     &-\epsilon\|x-\bar x\|\ge   \\
    &\dist{\nullv}{f(\bar p,\bar x)+\Fan(x-\bar x)+T(x)}-(\calmap{f(\cdot,\bar x)}{\bar p}
    +\eta)d(p,\bar p)-\epsilon\|x-\bar x\|
\end{eqnarray*}
for every $x\in\ball{\bar x}{\delta}$ and $p\in\ball{\bar p}{\tilde\zeta}$,
wherefrom it follows
\begin{eqnarray*}
     \dist{\nullv}{f(\bar p,\bar x)+\Fan(x-\bar x)+T(x)} &\le& 
    \dist{\nullv}{f(p,x)+T(x)}   \\
   &+& (\calmap{f(\cdot,\bar x)}{\bar p}
    +\eta)d(p,\bar p)+\epsilon\|x-\bar x\|.
\end{eqnarray*}
Now, according to assumption (iii), since $\bar x$ is evidently
a solution to $\AGEq$, corresponding to $\eta>0$ there exists
$\tilde\delta\in (0,\delta)$ such that
\begin{eqnarray*}
    \|x-\bar x\| &\le& (\subreg{(\Fan+T)}{\bar x}{\nullv}+\eta)
   \dist{\nullv}{f(\bar p,\bar x)+\Fan(x-\bar x)+T(x)}     \\
   &\le & (\subreg{(\Fan+T)}{\bar x}{\nullv}+\eta)
     \biggl(\dist{\nullv}{f(p,x)+T(x)}  \\
    &+ & (\calmap{f(\cdot,\bar x)}{\bar p}
    +\eta)d(p,\bar p)+\epsilon\|x-\bar x\| \biggl)
\end{eqnarray*}
and hence
\begin{eqnarray*}
    (1-\epsilon (\subreg{(\Fan+T)}{\bar x}{\nullv}+\eta))\|x-\bar x\| &\le&
    (\subreg{(\Fan+T)}{\bar x}{\nullv}+\eta)   \\
    &\cdot &\biggl(\dist{\nullv}{f(p,x)+T(x)}     \\
    &+& (\calmap{f(\cdot,\bar x)}{\bar p}
    +\eta)d(p,\bar p) \biggl)
\end{eqnarray*}
for every $x\in\ball{\bar x}{\tilde\delta}$ and $p\in
\ball{\bar p}{\tilde\zeta}$.
As a consequence, whenever it is $x\in\Solv{f}{T}(p)\cap
\ball{\bar x}{\tilde\delta}$, it results in
$$
    \|x-\bar x\|\le {(\subreg{(\Fan+T)}{\bar x}{\nullv}+\eta)\cdot
      (\calmap{f(\cdot,\bar x)}{\bar p}+\eta)
 \over (1-\epsilon (\subreg{(\Fan+T)}{\bar x}{\nullv}+\eta))}
  \, d(p,\bar p),
$$
for every $p\in\ball{\bar p}{\tilde\zeta}$.
The last inequality shows that $\Solv{f}{T}$ has the isolated
calmness property at $(\bar p,\bar x)$ with 
$$
    \calm{\Solv{f}{T}}{\bar p}{\bar x}\le
    {(\subreg{(\Fan+T)}{\bar x}{\nullv}+\eta)\cdot
    (\calmap{f(\cdot,\bar x)}{\bar p}+\eta)\over
    (1-\epsilon (\subreg{(\Fan+T)}{\bar x}{\nullv}+\eta))}.
$$
From the last inequality and the arbitrariness of $\eta$, it is
possible to deduce the estimate in the thesis, thereby completing
the proof.
\hfill $\square$
\end{proof}

\begin{corollary}
With reference to a generalized equation $\GEq$, let
$\bar x\in\Solv{f}{T}(\bar p)$. Suppose the data of $\GEq$
to satisfy the following assumptions:

\noindent (i) function $f(\cdot,\bar x)$ is calm at $\bar p$
with modulus $\calmap{f(\cdot,\bar x)}{\bar p}$;

\noindent (ii') $f$ has a partial outer prederivative $\Fan$ at
$(\bar p,\bar x)$, uniformly with respect to $p$;

\noindent (iii) the set-valued mapping $x\rightrightarrows f(\bar p,
\bar x)+\Fan(x-\bar x)+T(x)$ is strongly metrically subregular at
$(\bar x,\nullv)$, with modulus $\subreg{(\Fan+T)}{\bar x}{\nullv}$.

\noindent Then $\Solv{f}{T}$ has the isolated calmness property
at $(\bar p,\bar x)$ and the stricter estimate 
\begin{eqnarray}
    \calm{\Solv{f}{T}}{\bar p}{\bar x}\le \calmap{f(\cdot,\bar x)}{\bar p}
    \cdot\subreg{(\Fan+T)}{\bar x}{\nullv}
\end{eqnarray}
holds.
\end{corollary}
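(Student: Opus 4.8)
The plan is to deduce this corollary from Theorem \ref{thm:impisocalm} by a limiting argument in the approximation parameter $\epsilon$, in the very spirit in which the second (sharper) assertion of Theorem \ref{thm:Fansc} was obtained from its first one. The crucial observation is that a partial outer prederivative is automatically a partial outer $\epsilon$-prederivative for \emph{every} $\epsilon>0$: this is the parameterized counterpart of Remark \ref{rem:oepreder}. Indeed, if $\Fan$ is a partial outer prederivative of $f$ at $(\bar p,\bar x)$ uniformly with respect to $p$, then the associated remainder function $r(p,\cdot)$ tends to $0$ as $v\to\nullv$, uniformly in $p$ near $\bar p$; hence, for each prescribed $\epsilon>0$, one may shrink the radius $\delta$ so that $r$ takes values in $[0,\epsilon]$, which is precisely what Definition \ref{def:parzeopreder} requires.

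Granting this, first I would fix an arbitrary $\epsilon>0$ small enough that
$$
   \epsilon\cdot\subreg{(\Fan+T)}{\bar x}{\nullv}<1.
$$
Such a choice is always available: assumption (iii) guarantees that the modulus $\subreg{(\Fan+T)}{\bar x}{\nullv}$ is finite, so the left-hand side can be driven below $1$ by taking $\epsilon$ sufficiently small (and if that modulus happens to be $0$, the inequality holds for every $\epsilon$). With this $\epsilon$ fixed, assumptions (i), (ii') and (iii) of the corollary supply exactly the hypotheses (i), (ii) and (iii) of Theorem \ref{thm:impisocalm}, including the quantitative condition (\ref{in:impisocalmcon}). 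Applying that theorem yields at once that $\Solv{f}{T}$ has the isolated calmness property at $(\bar p,\bar x)$ --- a purely qualitative conclusion that does not depend on the particular $\epsilon$ --- together with the estimate
$$
   \calm{\Solv{f}{T}}{\bar p}{\bar x}\le
   {\calmap{f(\cdot,\bar x)}{\bar p}\cdot\subreg{(\Fan+T)}{\bar x}{\nullv}
   \over 1-\epsilon\cdot\subreg{(\Fan+T)}{\bar x}{\nullv}}.
$$

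It then remains to let $\epsilon\to 0^+$. Since the displayed bound is valid for every admissible $\epsilon$ while its left-hand side is a fixed quantity, passing to the infimum over $\epsilon$ collapses the denominator to $1$ and produces the sharper estimate
$$
   \calm{\Solv{f}{T}}{\bar p}{\bar x}\le\calmap{f(\cdot,\bar x)}{\bar p}
   \cdot\subreg{(\Fan+T)}{\bar x}{\nullv},
$$
which is the assertion of the thesis. I expect the only delicate point to be the uniformity in $p$ of the convergence $r(p,v)\to 0$: it must be this \emph{uniform} decay --- rather than a mere pointwise decay for each fixed $p$ --- that legitimizes selecting a single radius $\delta$ on which $\Fan$ serves as a partial outer $\epsilon$-prederivative valid simultaneously for all parameters near $\bar p$, thereby allowing Theorem \ref{thm:impisocalm} to be invoked verbatim.
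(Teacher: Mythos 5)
Your proposal is correct and follows essentially the same route as the paper: invoke the fact that a partial outer prederivative is a partial outer $\epsilon$-prederivative for every $\epsilon>0$, choose $\epsilon$ small enough that condition (\ref{in:impisocalmcon}) holds, apply Theorem \ref{thm:impisocalm}, and let $\epsilon\to 0^+$ to sharpen the estimate. Your explicit treatment of the limiting step and of the uniformity in $p$ of the remainder's decay merely spells out details the paper leaves implicit.
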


\begin{proof}
The thesis can be easily achieved by applying Theorem 
\ref{thm:impisocalm}. Recall indeed that with assumption (ii') being
valid, $\Fan$ is an outer partial $\epsilon$-prederivative of $f$
at $(\bar p,\bar x)$, for any $\epsilon>0$. Then, it suffices to observe that, since
$\epsilon$ can be taken arbitrarily ``small", condition (\ref{in:impisocalmcon})
is fulfilled independently of the value of $\subreg{(\Fan+T)}{\bar x}{\nullv}$.
\hfill $\square$
\end{proof}

\begin{remark}
A result quite close to Theorem \ref{thm:impisocalm}, called ``implicit
mapping theorem with strong metric subregularity",  can be found in
\cite{DonRoc09} (Theorem 3I.12). Instead of prederivatives, partial
estimators (i.e. first-order $\epsilon$-approximations) are employed
there. In this regard, it is must be noted that the technique of proof
in Theorem \ref{thm:impisocalm} can be readily adapted to derive
a version of it, employing partial first-order $\epsilon$-approximations
of the base term.
\end{remark}

Theorem \ref{thm:impisocalm} reduces the study of the isolated
calmness property of $\Solv{f}{T}$ to the certification of the strong
metric subregularity of the set-valued mapping defining $\AGEq$.
The latter question is expected to be easier to be faced than
a direct study of $\Solv{f}{T}$, inasmuch as the former set-valued mapping
is explicitly defined in terms of problem data or their approximation,
while $\Solv{f}{T}$ can be hardly calculated in practice. Besides, in some
special case, the study of the strong metric subregularity of the
set-valued mapping defining $\AGEq$ may happen to be particularly
simple. Let us consider, as an example, the case in which the field
term $T$ happens to be single-valued near $\bar x$.

\begin{corollary}
Let $\bar x\in\Solv{f}{T}(\bar p)$ be a solution to $\GEq$.
Suppose that:

\noindent (i) function $f(\cdot,\bar x)$ is calm at $\bar p$
with modulus $\calmap{f(\cdot,\bar x)}{\bar p}$;

\noindent (ii) $f$ admits a partial outer $\epsilon$-prederivative
$\Fan$ at $(\bar p,\bar x)$, uniformly with respect to $p$;

\noindent (iii) $T$ is single-valued near $\bar x$ and calm
at $\bar x$, with modulus $\calmap{T}{\bar x}$;

\noindent (iv) the following condition holds
\begin{eqnarray}    \label{in:condsingval}
      \alpha(\Fan)-\calmap{T}{\bar x}>\epsilon.
\end{eqnarray}

\noindent Then $\Solv{f}{T}$ has the isolated calmness property
at $(\bar p,\bar x)$ and the following modulus estimate holds
\begin{eqnarray*}
    \calm{\Solv{f}{T}}{\bar p}{\bar x}\le {\calmap{f(\cdot,\bar x)}{\bar p}
    \over\alpha(\Fan)-\calmap{T}{\bar x}-\epsilon}.
\end{eqnarray*}
\end{corollary}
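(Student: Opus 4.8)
The plan is to deduce the statement directly from Theorem \ref{thm:impisocalm}. Assumptions (i) and (ii) here are literally assumptions (i) and (ii) there, so the only work is to produce assumption (iii) of that theorem, that is, the strong metric subregularity at $(\bar x,\nullv)$ of the mapping $M(x)=f(\bar p,\bar x)+\Fan(x-\bar x)+T(x)$, together with the inequality $\epsilon\cdot\subreg{(\Fan+T)}{\bar x}{\nullv}<1$ appearing in (\ref{in:impisocalmcon}). Everything then follows by feeding the resulting modulus bound into Theorem \ref{thm:impisocalm} and simplifying.

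First I would isolate the purely positively homogeneous part $F(x)=f(\bar p,\bar x)+\Fan(x-\bar x)$. Since $\bar x\in\Solv{f}{T}(\bar p)$ means $\nullv\in f(\bar p,\bar x)+T(\bar x)$, and since by (iii) the field $T$ is single-valued near $\bar x$, we get $-T(\bar x)=f(\bar p,\bar x)\in F(\bar x)$, so $(\bar x,-T(\bar x))\in\gr F$. Now $F$ is obtained from $\Fan$ by the isometric shift $x\mapsto x-\bar x$ in the domain and the shift $y\mapsto y+f(\bar p,\bar x)$ in the range; as strong metric subregularity and its modulus are invariant under such shifts (the norm metric on $\Y$ is shift-invariant), and since $\Fan$ is strongly metrically subregular at $(\nullv,\nullv)$ with $\subreg{\Fan}{\nullv}{\nullv}=1/\alpha(\Fan)$ as soon as $\alpha(\Fan)>0$ (which holds by (iv), as $\alpha(\Fan)>\calmap{T}{\bar x}+\epsilon\ge\epsilon>0$), it follows that $F$ is strongly metrically subregular at $(\bar x,-T(\bar x))$ with $\subreg{F}{\bar x}{-T(\bar x)}=1/\alpha(\Fan)$.

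Next I would treat $T$, which by (iii) is single-valued and calm near $\bar x$, as a single-valued calm perturbation $g$ of $F$ and invoke Theorem \ref{thm:smsubregstacalm}. Condition (iv) gives $\subreg{F}{\bar x}{-T(\bar x)}\cdot\calmap{T}{\bar x}=\calmap{T}{\bar x}/\alpha(\Fan)<1$, so $M=F+T$ is strongly metrically subregular at $(\bar x,-T(\bar x)+T(\bar x))=(\bar x,\nullv)$, with
$$
  \subreg{(\Fan+T)}{\bar x}{\nullv}\le\frac{1/\alpha(\Fan)}{1-\calmap{T}{\bar x}/\alpha(\Fan)}=\frac{1}{\alpha(\Fan)-\calmap{T}{\bar x}}.
$$
This supplies assumption (iii) of Theorem \ref{thm:impisocalm}. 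Moreover, (iv) rearranges to $\epsilon<\alpha(\Fan)-\calmap{T}{\bar x}$, whence $\epsilon\cdot\subreg{(\Fan+T)}{\bar x}{\nullv}\le\epsilon/(\alpha(\Fan)-\calmap{T}{\bar x})<1$, which is exactly (\ref{in:impisocalmcon}).

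Finally I would apply Theorem \ref{thm:impisocalm}, obtaining the isolated calmness of $\Solv{f}{T}$ at $(\bar p,\bar x)$ and the estimate it provides. To reach the sharper stated bound, I would use that $t\mapsto t/(1-\epsilon t)$ is increasing on $[0,1/\epsilon)$: substituting $\subreg{(\Fan+T)}{\bar x}{\nullv}\le 1/(\alpha(\Fan)-\calmap{T}{\bar x})$ into the estimate of Theorem \ref{thm:impisocalm} yields
$$
  \calm{\Solv{f}{T}}{\bar p}{\bar x}\le\frac{\calmap{f(\cdot,\bar x)}{\bar p}}{\alpha(\Fan)-\calmap{T}{\bar x}-\epsilon}.
$$
The only delicate point is the bookkeeping in the middle step: correctly identifying the base value $-T(\bar x)$ so that the perturbation $T$ lands the composite mapping exactly at the target $\nullv$, and checking that the \emph{local} single-valuedness of $T$ near $\bar x$ is enough to legitimize Theorem \ref{thm:smsubregstacalm} (which it is, since both calmness and strong metric subregularity are local notions). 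Everything after that is elementary monotonicity and arithmetic on the moduli.
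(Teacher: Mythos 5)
Your proposal is correct and follows essentially the same route as the paper: establish the strong metric subregularity of $x\rightrightarrows f(\bar p,\bar x)+\Fan(x-\bar x)+T(x)$ at $(\bar x,\nullv)$ by viewing it as a calm single-valued perturbation of the shifted prederivative via Theorem \ref{thm:smsubregstacalm}, then feed the resulting modulus bound into Theorem \ref{thm:impisocalm}. The only (immaterial) difference is bookkeeping — the paper absorbs the constant $f(\bar p,\bar x)$ into the perturbation $g=f(\bar p,\bar x)+T$ and keeps the reference output at $\nullv$, whereas you absorb it into the homogeneous part and shift the reference value to $-T(\bar x)$ — and your explicit monotonicity argument for the final estimate is a detail the paper leaves implicit.
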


\begin{proof}
Observe first of all that $\Fan(\cdot-\bar x)$ is strongly metrically
subregular at $(\bar x,\nullv)$ iff $\Fan$ is so at $(\nullv,\nullv)$,
and  one has
$$
    \subreg{\Fan(\cdot-\bar x)}{\bar x}{\nullv}=
    \subreg{\Fan}{\nullv}{\nullv}.
$$
Under the current assumptions, it is possible to apply Theorem
\ref{thm:smsubregstacalm}, with $F=\Fan(\cdot-\bar x)$ and
$g=f(\bar p,\bar x)+T$. Indeed, it is clear that
$$
    \calmap{(f(\bar p,\bar x)+T)}{\bar x}=\calmap{T}{\bar x},
$$
so, in force of condition (\ref{in:condsingval}), it holds
$$
   \subreg{\Fan(\cdot-\bar x)}{\bar x}{\nullv}\cdot
   \calmap{(f(\bar p,\bar x)+T)}{\bar x}={\calmap{T}{\bar x}
    \over\alpha(\Fan)}<1.
$$
Consequently, the set-valued mapping $x\rightrightarrows f(\bar p,
\bar x)+\Fan(x-\bar x)+T(x)$ turns out to be strongly metrically
subregular at $(\bar x,\nullv)$, with
$$
    \subreg{(\Fan+T)}{\bar x}{\nullv}\le {1\over\alpha(\Fan)
  -\calmap{T}{\bar x}}.
$$
One is therefore in a position to apply Theorem \ref{thm:impisocalm},
as the validity of condition (\ref{in:impisocalmcon}) is ensured
by the assumption (\ref{in:condsingval}). Thus the proof is complete.
\hfill $\square$
\end{proof}

In the remaining part of this section, while continuing to
assume $T$ to be single-valued, a further result is presented,
which can be obtained via the scalarization approach.

\begin{theorem}
With reference to a generalized equation $\GEq$, let $\bar x\in
\Solv{f}{T}(\bar p)$. Suppose that:

\noindent (i) $T$ is single-valued near $\bar x$ and calm
at $\bar x$, with modulus $\calmap{T}{\bar x}$;

\noindent (ii) $f(\cdot,x)$ is calm at $\bar p$, uniformly with
respect to $x$ near $\bar x$, with modulus $\calmap{f(\cdot,x)}{\bar p}$;

\noindent (iii) the space $(\Y,\|\cdot\|)$ is partially ordered by
a cone $\Y_+$ and the mapping $f(\bar p,\cdot)$ is $\Y_+$-convex;

\noindent (iv) both the conditions
\begin{eqnarray}    \label{in:scaconbarp}
    \nullv^*\in\bigcup_{y^*\in\Usfer^*\cap\Y^*_+}\inte\partial
   (y^*\circ f)(\bar x)
\end{eqnarray}
and
\begin{eqnarray}     \label{in:subclm1}
    {\calmap{T}{\bar x}\over
     \intrad{f(\bar p,\cdot)}{\bar x}}<1
\end{eqnarray}
hold true.

\noindent Then $\Solv{f}{T}$ has the isolated calmness property
at $(\bar p,\bar x)$ and the following modulus estimate holds
\begin{eqnarray*}
    \calm{\Solv{f}{T}}{\bar p}{\bar x}\le
  {\calmap{f(\cdot,x)}{\bar p}
    \over \intrad{f(\bar p,\cdot)}{\bar x}-\calmap{T}{\bar x}}.
\end{eqnarray*}
\end{theorem}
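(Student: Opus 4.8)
The plan is to reduce the whole statement to the strong metric subregularity of the single map $\Phi(x):=f(\bar p,x)+T(x)$ at $(\bar x,\nullv)$, and then to run the parameter estimate just as in the proof of Theorem \ref{thm:impisocalm}, but with the exact map $f(\bar p,\cdot)$ in place of a prederivative approximation (so that no $\epsilon$-error is incurred). Note first that, since $T$ is single-valued near $\bar x$ and $\bar x\in\Solv{f}{T}(\bar p)$, one has $f(\bar p,\bar x)+T(\bar x)=\nullv$, and moreover $\Phi^{-1}(\nullv)=\Solv{f}{T}(\bar p)$; hence the isolatedness of $\bar x$ in $\Solv{f}{T}(\bar p)$ needed for isolated calmness will follow for free from the strong metric subregularity of $\Phi$.

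First I would establish that $\Phi$ is strongly metrically subregular at $(\bar x,\nullv)$. By hypothesis (iii) the map $f(\bar p,\cdot)$ is $\Y_+$-convex, and condition \eqref{in:scaconbarp} is precisely the hypothesis of Theorem \ref{thm:smsubconv} applied to $f(\bar p,\cdot)$; that theorem therefore yields strong metric subregularity of $f(\bar p,\cdot)$ at $\bar x$ together with the bound $\subregmap{f(\bar p,\cdot)}{\bar x}\le 1/\intrad{f(\bar p,\cdot)}{\bar x}$. I would then treat $T$ as an additive calm perturbation: by hypothesis (i) it is single-valued and calm at $\bar x$, so Theorem \ref{thm:smsubregstacalm} applies with $F=f(\bar p,\cdot)$ and $g=T$, the smallness requirement $\subregmap{f(\bar p,\cdot)}{\bar x}\cdot\calmap{T}{\bar x}<1$ being guaranteed by $\calmap{T}{\bar x}/\intrad{f(\bar p,\cdot)}{\bar x}<1$ from \eqref{in:subclm1}. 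Combining the perturbation estimate \eqref{in:estsubregpert} with the bound on $\subregmap{f(\bar p,\cdot)}{\bar x}$ gives
\begin{eqnarray*}
   \subreg{\Phi}{\bar x}{\nullv}\le\frac{\subregmap{f(\bar p,\cdot)}{\bar x}}{1-\subregmap{f(\bar p,\cdot)}{\bar x}\cdot\calmap{T}{\bar x}}\le\frac{1}{\intrad{f(\bar p,\cdot)}{\bar x}-\calmap{T}{\bar x}},
\end{eqnarray*}
where the last inequality uses monotonicity of $s\mapsto s/(1-s\calmap{T}{\bar x})$.

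Next I would carry out the parameter estimate. Fix $\kappa>\subreg{\Phi}{\bar x}{\nullv}$; strong metric subregularity of $\Phi$ furnishes $\delta>0$ with $\|x-\bar x\|\le\kappa\,\|f(\bar p,x)+T(x)\|$ for $x\in\ball{\bar x}{\delta}$ (single-valuedness lets one replace $\dist{\nullv}{\Phi(x)}$ by the norm of the image point). The decisive observation is that if $x\in\Solv{f}{T}(p)$ then $f(p,x)+T(x)=\nullv$, so that the field term cancels and only the parameter increment of the base survives:
\begin{eqnarray*}
   \|f(\bar p,x)+T(x)\|=\|f(\bar p,x)-f(p,x)\|.
\end{eqnarray*}
Hypothesis (ii) (uniform calmness of $f(\cdot,x)$ at $\bar p$) then bounds this by $(\calmap{f(\cdot,x)}{\bar p}+\eta)\,d(p,\bar p)$ on suitable neighbourhoods, for arbitrary $\eta>0$. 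Chaining the two inequalities yields $\|x-\bar x\|\le\kappa\,(\calmap{f(\cdot,x)}{\bar p}+\eta)\,d(p,\bar p)$ for $x\in\Solv{f}{T}(p)\cap\ball{\bar x}{\delta}$, and letting $\kappa\downarrow\subreg{\Phi}{\bar x}{\nullv}$ and $\eta\downarrow 0$ produces the claimed modulus. Putting $p=\bar p$ in this inequality forces $x=\bar x$, which is the isolatedness required for the isolated calmness property.

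The step I expect to be most delicate is the bookkeeping of neighbourhoods in the last paragraph: one must shrink the radius in $x$ (coming from strong metric subregularity of $\Phi$) and the radius in $p$ (coming from the uniform calmness of $f(\cdot,x)$) simultaneously, and ensure the estimate from (ii) holds precisely on the set of $x$ where the subregularity inequality is available. This is exactly the quantifier interplay already handled in the proof of Theorem \ref{thm:impisocalm}, on which I would model the argument; the genuinely new content here is only the cancellation identity for $\|f(\bar p,x)+T(x)\|$ together with the replacement of the prederivative hypothesis by the convexity/scalarization route of Theorem \ref{thm:smsubconv}.
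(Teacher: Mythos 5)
Your proposal is correct and follows essentially the same route as the paper: scalarization (Theorem \ref{thm:smsubconv}) gives strong metric subregularity of $f(\bar p,\cdot)$, the calm perturbation result (Theorem \ref{thm:smsubregstacalm}) transfers it to $f(\bar p,\cdot)+T$ with the modulus bound $1/(\intrad{f(\bar p,\cdot)}{\bar x}-\calmap{T}{\bar x})$, and the parameter estimate is obtained by combining the subregularity inequality with the uniform calmness of $f(\cdot,x)$ for $x\in\Solv{f}{T}(p)$. Your cancellation identity $\|f(\bar p,x)+T(x)\|=\|f(\bar p,x)-f(p,x)\|$ is just the paper's triangle-inequality step with the vanishing term $\|f(p,x)+T(x)\|$ made explicit, so no genuinely different idea is involved.
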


\begin{proof}
Hypothesis (iii) and condition (\ref{in:scaconbarp}) allow one to
apply Theorem \ref{thm:smsubconv} to the mapping $f(\bar p,\cdot)$.
According to it, $f(\bar p,\cdot)$ turns out to be strongly metrically
subregular at $\bar x$ and it results in
$$
     \subregmap{f(\bar p,\cdot)}{\bar x}\le{1
     \over\intrad{f(\bar p,\cdot)}{\bar x}}.
$$
Now, since owing to condition (\ref{in:subclm1}) it is
$$
    \subregmap{f(\bar p,\cdot)}{\bar x}\cdot\calmap{T}{\bar x}<1,
$$
Theorem \ref{thm:smsubregstacalm} guarantees that the mapping $x
\mapsto f(\bar p,x)+T(x)$ is strongly metrically subregular at $(\bar x,\nullv)$
and that it results in
$$
    \subreg{(f(\bar p,\cdot)+T)}{\bar x}{\nullv}\le
   {1\over\intrad{f(\bar p,\cdot)}{\bar x}-\calmap{T}{\bar x}}.
$$
This means that, corresponding to $\eta>0$, there exists $r>0$
such that
$$
    \|x-\bar x\|\le {(1+\eta)\|f(\bar p,x)+T(x)\|\over
    \intrad{f(\bar p,\cdot)}{\bar x}-\calmap{T}{\bar x}},
    \quad\forall x\in\ball{\bar x}{r}.
$$
By taking account of hypothesis (i),
one has that for some $\zeta>0$  and $\tilde r\in (0,r)$ it holds
\begin{eqnarray*}
   \|x-\bar x\|&\le &{(1+\eta)\|f(\bar p,x)-f(p,x)\|+\|f(p,x)+T(x)\|\over
   \intrad{f(\bar p,\cdot)}{\bar x}-\calmap{T}{\bar x}}    \\
   &\le &{(1+\eta)[(\calmap{f(\cdot,x)}{\bar p}+\eta)d(p,\bar p)+\|f(p,x)+T(x)\|]\over
    \intrad{f(\bar p,\cdot)}{\bar x}-\calmap{T}{\bar x}} 
\end{eqnarray*}
for every $x\in\ball{\bar x}{\tilde r}$ and $p\in\ball{\bar p}{\zeta}$.
Thus, if taking $x\in\ball{\bar x}{\tilde r}\cap\Solv{f}{T}(p)$, one finds
$$
    \|x-\bar x\|\le{(1+\eta)(\calmap{f(\cdot,x)}{\bar p}+\eta)\over
   \intrad{f(\bar p,\cdot)}{\bar x}-\calmap{T}{\bar x}}\,d(p,\bar p),
   \quad\forall p\in\ball{\bar p}{\zeta}.
$$
The last inequality shows that $\Solv{f}{T}$ fulfils the isolated
calmness property at $(\bar p,\bar x)$ and, by arbitrariness
of $\eta$, it allows one to achieve the asserted modulus estimation.
\hfill $\square$
\end{proof}


\section{Conclusions}
\label{sec:6}

The appraoch of analysis proposed in this paper shows that several techniques
for detecting strong metric subregularity of nonsmooth mappings can be
derived from a unique elementary criterion, based on the notion of
steepest displacement rate, which can be formulated already in a metric
space setting. This criterion, besides providing a unifying scheme of analysis
with transparent proofs, emphasizes the variational nature of the
property under study. Optimization (especially, nondifferentiable
optimization) is well recognized as a field where many results and constructions
of set-valued analysis are fruitfully applied.
The findings of the present study should contribue to the make it
evident that, simmetrically, nondifferentiable optimization can provide
useful insights and methods for investigating properties of multifunctions,
some of them not necessarily related to extremum problems.
This seems to agree with the very spirit of the Euler's variational faith.




\end{document}